\theoremstyle{plain} 
 \newtheorem{theorem}{Theorem}[section]   
 \newtheorem{proposition}[theorem]{Proposition}
\theoremstyle{definition} 
 \newtheorem{remark}[theorem]{Remark}
 \newtheorem{definition}[theorem]{Definition}
\theoremstyle{remark} 
\numberwithin{equation}{section}  
\newcommand{\R}{\ensuremath{\mathbb{R}}} 
\newcommand{\C}{\ensuremath{\mathbb{C}}} 
\newcommand{\U}{\ensuremath{\mathcal{U}}} 
\newcommand{\V}{\ensuremath{\mathcal{V}}} 
\newcommand{\W}{\ensuremath{\mathcal{W}}} 
\newcommand{\qbinom}[3]{\ensuremath{\left[\begin{array}{c} #1 \\ #2 \\ \end{array}\right]_{#3}}} 
\begin{document}


\begin{frontmatter}
\title{On linearly related sequences of\\ difference derivatives of discrete orthogonal polynomials}

 \author[1]{R. \'{A}lvarez-Nodarse} \ead{ran@us.es}
 \author[2]{J. Petronilho} \ead{josep@mat.uc.pt}
 \author[3]{N. C. Pinz\'on-Cort\'es} \ead{pinzon.natalia@javeriana.edu.co}
 \author[4]{R. Sevinik-Ad\i g{\"{u}}zel} \ead{sevinikrezan@gmail.com}

  \address[1]{Universidad de Sevilla, Departamento de An\'alisis Matem\'atico, IMUS.\\ Apdo. 1160, E-41080, Sevilla, Spain.}
  \address[2]{University of Coimbra, Department of Mathematics, CMUC.\\ EC Santa Cruz, 3001-501 Coimbra, Portugal.}
  \address[3]{Pontificia Universidad Javeriana. Departamento de Matemáticas. Carrera 7, 40 - 62, Bogotá, Colombia}
  \address[4]{Sel\c{c}uk University, Department of Mathematics, Faculty of Science.\\ 42075, Konya, Turkey.}


\begin{abstract}
Let $\nu$ be either $\omega\in\C\setminus\{0\}$ or
$q\in\C\setminus\{0,1\}$, and let $D_\nu$ be the corresponding
difference operator defined in the usual way either by $D_\omega p(x) = \frac{p(x+\omega)-p(x)}{\omega}$ or
$D_q p(x) = \frac{p(qx)-p(x)}{(q-1)x}$. Let $\U$ and $\V$ be two moment regular linear functionals
and let $\{P_n(x)\}_{n\geq0}$ and $\{Q_n(x)\}_{n\geq0}$ be their corresponding orthogonal
polynomial sequences (OPS). We discuss an inverse problem in the theory of
discrete orthogonal polynomials involving the two OPS $\{P_n(x)\}_{n\geq0}$ and $\{Q_n(x)\}_{n\geq0}$
assuming that their difference derivatives $D_\nu$
of higher orders $m$ and $k$ (resp.) are connected by a linear algebraic
structure relation such as
\begin{equation*}
   \sum_{i=0}^M a_{i,n} D_\nu^m P_{n+m-i}(x) = \sum_{i=0}^N b_{i,n} D_\nu^k Q_{n+k-i}(x), \quad n\geq 0,
  \end{equation*}
where $M,N,m,k\in\mathbb{N}\cup\{0\}$, $a_{M,n}\neq0$ for $n\geq M$, $b_{N,n}\neq0$ for $n\geq N$,
and $a_{i,n}=b_{i,n}=0$ for $i>n$.
Under certain conditions, we prove that $\U$ and $\V$ are related by a rational factor (in the $\nu-$distributional sense).
Moreover, when $m\neq k$ then both $\U$ and $\V$ are $D_\nu$-semiclassical functionals. This leads us to the concept of $(M,N)$-$D_\nu$-coherent pair of order $(m,k)$ extending to the discrete case several previous works.
As an application we consider the OPS with respect to the following Sobolev-type inner product
  \begin{equation*}
   \left\langle p(x),r(x) \right\rangle_{\lambda,\nu} = \left\langle \U, p(x)r(x) \right\rangle  + \lambda \left\langle \V, (D^m_\nu p)(x)(D^m_\nu r)(x) \right\rangle, \quad \lambda>0,
  \end{equation*}
assuming that $\U$ and $\V$ (which, eventually, may be represented by discrete measures supported either on a uniform lattice if $\nu=\omega$, or on a $q$-lattice if $\nu=q$) constitute a $(M,N)$-$D_\nu$-coherent pair of order $m$ (that is, an $(M,N)$-$D_\nu$-coherent pair of order $(m,0)$), $m\in\mathbb{N}$ being fixed.
\end{abstract}

\begin{keyword}
Orthogonal polynomials \sep inverse problems \sep semiclassical
orthogonal polynomials \sep coherent pairs \sep Sobolev-type orthogonal polynomials.\\
\textit{2010 Mathematics Subject Classification:} 33C45, 41A10, 42C05.
\end{keyword}

\end{frontmatter}


\section{Introduction}
\label{introduction}

An interesting problem in the theory of orthogonal polynomials is the
one associated with linearly related sequences of derivatives of
two sequences of polynomials (see e.g.
\cite{Jesus_Marcellan_Petronilho_PinzonCortes201X,Jesus_Petronilho_2008,Jesus_Petronilho_2013,Petronilho_2006}
and references therein). To be more precise assume that $\U$ and $\V$ are two
regular functionals and suppose that their corresponding orthogonal polynomial sequences (OPS)
 $\{P_n(x)\}_{n\geq0}$  and $\{Q_n(x)\}_{n\geq0}$ are connected by the following linear structure relation
\begin{equation}\label{coh-intro}
\sum_{i=0}^M a_{i,n} D^m P_{n+m-i}(x) = \sum_{i=0}^N b_{i,n} D^k Q_{n+k-i}(x), \quad n\geq 0,
\end{equation}
where $M,N,m,k\in\mathbb{N}\cup\{0\}$, $a_{i,n}$ and $b_{i,n}$ are complex parameters with $a_{M,n}\neq0$
for $n\geq M$, $b_{N,n}\neq0$ for $n\geq N$, and
$a_{i,n}=b_{i,n}=0$ for $i>n$, and $D^j$ is the (continuous) derivative operator of order $j$.
The pair $(\U,\V)$ such that the above  relation \eqref{coh-intro} holds is said to be a
$(M,N)$-coherent pair of order $(m,k)$.

Historically, the notion of coherent pair --i.e. $(1,0)$-coherent pair of order $(1,0)$,
according with the terminology above-- arose in the framework of the theory of Sobolev orthogonal polynomials and it was introduced by A. Iserles, P. E. Koch, S. P. N{\o}rsett and J. M. Sanz-Serna in the very influent work \cite{Iserles_Koch_Norsett_Sanz-Serna_1991}. Subsequent extensions of this notion have been widely introduced and studied in recent decades. For a review of the work done on the subject (including an historical perspective) see e.g. the introductory sections in the papers \cite{Jesus_Marcellan_Petronilho_PinzonCortes201X,Jesus_Petronilho_2013,Marcellan_PinzonCortes_2012}.
For instance, it is known that for a pair of positive definite linear functionals $(\U,\V)$, coherence of order $m$ ---i.e., of order $(m,0)$--- is a necessary and sufficient condition for the existence of an algebraic structure relation between the OPS with respect to $\U$ and the Sobolev OPS with respect to an appropriate inner product defined in terms of the measures $\mu_0$ and $\mu_1$ associated with $\U$ and $\V$ (resp.), such as
$$
 \left\langle p(x),q(x)\right\rangle_{\lambda}=\int_\mathbb{R} p(x)q(x) d\mu_0 + \lambda \int_\mathbb{R} p^{(m)}(x)q^{(m)}(x) d\mu_1, \lambda>0, m\in\mathbb{N}
$$
for every polynomials $p,q\in\mathbb{P}$. Indeed, this fact has been firstly remarked (and proved) in \cite{Iserles_Koch_Norsett_Sanz-Serna_1991} for ordinary coherent pairs (i.e., $(M,N,m,k)=(1,0,1,0)$), and stated in \cite{kcoerencia,mar-fil1998} for $N=0$ and $m=1$ (being $M$ arbitrary and $k=0$). The ideas presented in \cite{kcoerencia,mar-fil1998} have led to the statement of the mentioned structure relation for arbitrary $(M,N,m)$ (see \cite{Jesus_Petronilho_2013} for the case $m=1$ and \cite{Jesus_Marcellan_Petronilho_PinzonCortes201X} for arbitrary $m\geq1$). On the other hand, when $(\U,\V)$ is a $(M,N)$-coherent pair of order $(m,k)$ of regular linear functionals, it is known that these linear functionals are related by an expression of rational type in the distributional sense and, moreover, they are semiclassical when $m\neq k$ (see \cite{Petronilho_2006} for the case $m=k=0$,  \cite{Jesus_Petronilho_2008} for the cases $m=k$ and $m=k+1$, and \cite{Jesus_Marcellan_Petronilho_PinzonCortes201X} for arbitrary $m>k+1$).

The concept of coherent pair was extended to the OPS of a discrete variable by I. Area, E. Godoy, and F.
Marcell\'{a}n \cite{Area_Godoy_Marcellan_2000, Area_Godoy_Marcellan_2002, Area_Godoy_Marcellan_2003}, and also by F. Marcell\'an and N. Pinz\'on-Cort\'ez \cite{Marcellan_PinzonCortes_2012_Dw,Marcellan_PinzonCortes_2012_Dq}. Here we generalize this concept as follows. Let $\U$ and $\V$ be two regular linear functionals and let $\{P_n(x)\}_{n\geq0}$ and $\{Q_n(x)\}_{n\geq0}$ be their respective sequences of monic orthogonal polynomials (SMOP). $(\U,\V)$ is a \emph{$(M,N)$-$D_\nu$-coherent pair of order $(m,k)$}, for either $\nu=\omega\in\C\setminus\{0\}$ or $\nu=q\in\C\setminus\{0,1\}$, if the algebraic relation
\begin{equation*}
 \sum_{i=0}^M a_{i,n} D_\nu^m P_{n+m-i}(x) = \sum_{i=0}^N b_{i,n} D_\nu^k Q_{n+k-i}(x), \quad n\geq 0,
\end{equation*}
holds, where $M,N,m,k\in\mathbb{N}\cup\{0\}$, $\{a_{i,n}\}_{n\geq0}, \{b_{j,n}\}_{n\geq0}\subset\C$ for $0\leq i\leq M$ and $0\leq j\leq N$, $a_{M,n}\neq0$ for $n\geq M$, $b_{N,n}\neq0$ for $n\geq N$, $a_{i,n}=b_{i,n}=0$ for $i>n$, and
\begin{equation*}
 D_\omega p(x) = \frac{p(x+\omega)-p(x)}{\omega}, \quad D_q p(x) = \frac{p(qx)-p(x)}{(q-1)x}, \quad p\in\mathbb{P}.
\end{equation*}

Marcell\'{a}n and N. C. Pinz\'{o}n-Cort\'{e}s (\cite{Marcellan_PinzonCortes_2012_Dw} for $\nu=\omega$, \cite{Marcellan_PinzonCortes_2012_Dq} for $\nu=q$) showed that if $(\U,\V)$ is a $(1,1)$-$D_\nu$-coherent pair then they are $D_\nu$-semiclassical linear functionals (one of class at most $1$ and the other of class at most $5$) and they are related by $\sigma(x)\U=\rho(x)\V$, with $\mathrm{deg}(\sigma(x))\leq3, \mathrm{deg}(\rho(x))=1$. Also, they studied the case when $\U$ is $D_\nu$-classical. This is a generalization of the results obtained by I. Area, E. Godoy, and F. Marcell\'{a}n (\cite{Area_Godoy_Marcellan_2000, Area_Godoy_Marcellan_2003} for $\nu=\omega$, \cite{Area_Godoy_Marcellan_2002} for $\nu=q$) for $(1,0)$-$D_\nu$-coherent pairs. They proved that $(1,0)$-$D_\nu$-coherence is a sufficient condition for at least one of the linear functionals to be $D_\nu$-classical and each of them to be a rational modification of the other as above with $\mathrm{deg}(\sigma(x))\leq2$. Besides, they determined all $D_\nu$-coherent pairs of positive definite linear functionals when $\U$ or $\V$ is some specifical $D_\nu$-classical linear functional. Notice that from the study of $D_\nu$-coherent pairs it is possible to recover the properties of coherent pairs in the continuous case taking limits when $\omega\rightarrow0$ and $q\rightarrow1$.

As before, there is an important connection between $D_\nu$-Sobolev orthogonal polynomials and $D_\nu$-coherent pairs. In fact, we can consider the Sobolev inner product
\begin{equation}  \label{eq-sob-intro}
 \left\langle p(x),r(x) \right\rangle_{\lambda,\nu} = \left\langle \U, p(x)r(x) \right\rangle  + \lambda \left\langle \V, (D^m_\nu p)(x)(D^m_\nu r)(x) \right\rangle, \quad \lambda>0,
\end{equation}
for fixed $m\in\mathbb{N}$, when $\U$ and $\V$ (which will be supported on, either a uniform lattice if $\nu=\omega$, or a $q$-lattice if $\nu=q$) constitute a $(M,N)$-$D_\nu$-coherent pair of order $m$ (i.e., order $(m,0)$). In this way, K. H. Kwon, J. H. Lee and F. Marcell\'an (\cite{Kwon_Lee_Marcellan_2004}) showed that the $(M,0)$-$D_\omega$-coherence of order $1$ condition (for them, \emph{($M+1$-term) generalized $D_\omega$-coherence}) yields the relation
\begin{equation} \label{eq-Dw-intro}
 P_{n+1}(x) + \sum_{j=1}^{M} \frac{(n+1)a_{j,n}}{n-j+1} P_{n-j+1}(x) = S_{n+1}(x;\lambda,\omega) + \sum_{j=1}^{M}c_{j,n,\lambda,\omega}S_{n-j+1}(x;\lambda,\omega),
\end{equation}
for $n\geq M$, where $\{c_{n,\lambda,\omega}\}_{n\geq M}$ are rational functions in $\lambda>0$, $c_{M,n,\lambda,\omega}\neq0$, $a_{M,n}\neq0$, and $\{S_{n}(x;\lambda,\omega)\}_{n\geq0}$ is the SMOP associated with the inner product \eqref{eq-sob-intro} for $m=1$. Conversely, if \eqref{eq-Dw-intro} holds, then $(\U,\V)$ is a $(M,M)$-$D_\omega$-coherent pair. Additionally, they studied $(2,0)$-$D_1$-coherent pairs of order 1 and they concluded that the linear functionals must be $D_1$-semiclassical (of class $\leq6$ for $\U$ and of class $\leq2$ for $\V$), and they are related by a rational factor. Also, they analized the cases when either $\U$ or $\V$ is a $D_1$-classical linear functional.

 The aim of this work is twofold. On one hand we will extend some recent results concerning the
$(M,N)$-coherent pairs of order $(m,k)$ for the derivative operator and, on the other,
we will show that the concept of $(M,N)$-$D_\nu$ coherent pair of order $(m,k)$ for the discrete analogues
of the derivative $D_\nu$ will play an important role in the study of the
Sobolev orthogonal polynomials on linear and $q$-linear lattices similar to the one
that the  $(M,N)$-coherent pair of order $(m,k)$ plays in the theory of Sobolev orthogonal
polynomials \cite{Jesus_Marcellan_Petronilho_PinzonCortes201X}.
More precisely, we prove that the regular
linear functionals associated to an $(M,N)$-$D_\nu$-coherent pair  $(\U,\V)$ are related
by a rational modification (in the sense of the distribution theory) and, moreover,
$\U$ and $\V$ are both $D_\nu$-semiclassical when $m\neq k$ (see Theorem \ref{main-teo} from below).
As an application, we study the sequence of Sobolev OPS with respect to the Sobolev-type inner product
\eqref{eq-sob-intro}, under the assumption that  $(\U,\V)$ is a $(M,N)$-$D_\nu$-coherent pair
of positive definite discrete linear functionals
(see theorems \ref{teo (M,N)-Dnu-coherence order (m,0) impl relacion con Sobolev, Dw y Dq}
and \ref{teo (M,N)-Dnu-coherence order (m,0) impl eq recursiva s_n,nu y c_j,n,lambda,nu, Dw y Dq}).

The structure of this paper is as follows. In Section 2, we state the definitions, results and notation which will be useful in the forthcoming sections. In Section 3, we prove that if a pair of regular linear functionals form a $(M,N)$-$D_\nu$-coherent pair of order $(m,k)$, then they are related by an expression of rational type, and in the case when $m\neq k$, they are $D_\nu$-semiclassical. In Section 4, we show the relationship between $(M,N)$-$D_\nu$-coherent pairs of order $m$ and $D_\nu$-Sobolev orthogonal polynomials (orthogonal with respect to $\langle \cdot, \cdot \rangle_{\lambda,\nu}$ given in \eqref{eq-sob-intro}).


\section{Preliminaries and Notations}

Let $\mathbb{P}$ be the linear space of polynomials with complex coefficients and let $\mathbb{P}^{\prime}$ be its topological dual space which coincides
with its algebraic dual $\mathbb{P}^*$ \cite{Maroni_1985}. For $\U\in\mathbb{P}^{\prime}$ and $n\geq0$, $u_n=\langle\U,x^n\rangle\in\C$ is called the \emph{moment of order} $n$ of $\U$, where $\langle \U, p(x)\rangle\in\C$ denotes the image of $p\in\mathbb{P}$ by $\U\in\mathbb{P}^{\prime}$.
For $\pi\in\mathbb{P}$, $\pi(x)\U\in\mathbb{P}^{\prime}$ is defined by
$$
\langle\pi(x)\U,p(x)\rangle = \langle\U,\pi(x)p(x)\rangle,\qquad
p\in\mathbb{P}.
$$
Also, for a sequence of polynomials $\{p_n(x)\}_{n\geq0}$ with $\deg(p_n(x))=n$, $n\geq0$, we can consider its \emph{dual basis} $\{\mathfrak{p}_n\}_{n\geq0} \subset \mathbb{P}^\prime$ (i.e., $\langle \mathfrak{p}_n,p_m(x)\rangle=\delta_{n,m}$, $m,n\geq0$). In this way, any $\U\in\mathbb{P}^{\prime}$ can be expanded as $\U = \sum_{n\geq0} \left\langle \U, p_n(x) \right\rangle \mathfrak{p}_n$.

In this paper we will work with the following two linear difference operators on
$\mathbb{P}$
\begin{equation*}
\begin{split}
D_\omega: \,\mathbb{P}\mapsto\mathbb{P},\quad   D_\omega p(x) & = \frac{p(x+\omega)-p(x)}{\omega},\quad
\omega\in\C\setminus\{0\}, \\
 D_q: \,\mathbb{P}\mapsto\mathbb{P},\quad D_q p(x) & = \frac{p(qx)-p(x)}{(q-1)x},\qquad q\in\C\setminus\{0,\pm1\}.
 \end{split}
\end{equation*}
Notice that $D_1=\Delta$ and $D_{-1}=\nabla$ are the well-known forward and backward difference
operators, respectively, and $D_q$ is the classical $q$-derivative operator.

From now on, $\nu$ and $\nu^*$ denote either $\omega$ and $-\omega$, or, $q$ and $q^{-1}$, respectively. Then, for $\U\in\mathbb{P}^{\prime}$, the linear functional $D_\nu\U$ is defined by
\begin{equation*}
 \left\langle D_\nu\U, p(x)\right\rangle = -\left\langle \U, D_{\nu^*} p(x)\right\rangle, \quad p\in\mathbb{P}.
\end{equation*}

Notice that when $q\rightarrow1$ and $\omega\rightarrow0$ we recover the standard derivative operator.
Furthermore, when $\omega\rightarrow0$ and $q\rightarrow1$, $(D_{\nu}p)(x)\rightarrow\frac{d}{dx}p(x)$ in $\mathbb{P}$ and $D_{\nu}\U\rightarrow D\U$ in $\mathbb{P}^{\prime}$, where $D\U$ is defined by $\langle D\U, p(x) \rangle = -\left\langle \U, p'(x)\right\rangle, \forall p\in\mathbb{P}$.

For the difference operators $D_\nu$ the following straightforward  properties hold.
\begin{gather}
 D^m_\omega\left[p(x)\U \right] = \sum_{j=0}^m \binom{m}{j} \bigl(D^j_\omega p\bigr)\bigl(x+(m-j)\omega\bigr) \, D^{m-j}_\omega\U, \quad m\geq0, \label{dw-pu} \\
 D^m_q\left[p(x)\U \right] = \sum_{j=0}^m \qbinom{m}{j}{q} q^j\bigl(D^j_q p\bigr)\bigl(q^{m-j}x\bigr) \, D^{m-j}_q\U, \quad m\geq0, \label{q-dw-pu}
\end{gather}
where the \emph{$q$-binomial coefficient} is defined by
$$
\qbinom{n}{j}{q}:=\frac{(q,q)_n}{(q,q)_j(q,q)_{n-j}}, \quad
n\geq j\geq0,
$$
and $(\alpha;q)_n$ denotes the \emph{$q$-Pochhammer symbol}, which is the $q$-analogue of the
\emph{Pochhammer symbol} $(\alpha)_n$, defined by
\begin{gather*}
 (\alpha)_0:=1, \quad (\alpha)_n := \alpha(\alpha+1)\cdots(\alpha+n-1), \,\, n\geq1, \\
 (\alpha;q)_0:=1,\quad (\alpha;q)_n:=(1-\alpha)(1-\alpha q)\cdots(1-\alpha q^{n-1}), \,\, n\geq1.
\end{gather*}

Let $\U\in\mathbb{P}^{\prime}$ and $\{P_n(x)\}_{n\geq0}\subset\mathbb{P}$. $\{P_n(x)\}_{n\geq0}$ is called the \emph{sequence of monic orthogonal polynomials (SMOP)} with respect to $\U$ if $\deg(P_n(x))=n$ and $\langle \U ,P_n(x)P_m(x) \rangle = \xi_n\delta_{n,m}$, $\xi_n\neq0$, $n,m\geq0$. In this case, $\U$ is said to be \emph{regular or quasi-definite}, and $\Upsilon_n=\det\left([u_{i+j}]_{i,j=0}^n\right)\neq0$,
$\forall n\geq 0$. When $\Upsilon_n>0$, $n\geq 0$, $\U$ is called \emph{positive definite}. An important characterization of OPs is given by the \emph{Favard Theorem}: $\{P_n(x)\}_{n\geq 0}$ is the SMOP with respect to $\U$ if and only if there exist $\{\alpha_{n}\}_{n\geq 0},\{\beta_{n}\}_{n\geq 0}\subset\C$, $\beta_{n}\neq 0$, $n\geq 1$, such that the \emph{three-term recurrence relation (TTRR)} $P_{n+1}(x)=(x- \alpha_{n}) P_{n}(x) - \beta_{n}P_{n-1}(x)$, $n\geq 0$, holds, with $P_{0}(x)=1$, $P_{-1}(x)=0$. Moreover, $\U$ is positive definite if and only if $\alpha_{n}\in\R$ and $\beta_{n+1}>0$, for $n\geq 0$. (see e.g. \cite{Chihara_libro_1978}).

Consider $\{\mathfrak{p}_n\}_{n\geq0}$, the dual basis of the SMOP $\{P_n(x)\}_{n\geq0}$, then for fixed $m\geq 0$,
\begin{equation} \label{eq relacion base dual con U, y Dnu*^m e_n,nu = (-1)^m eta_n,nu p_n+m}
 \mathfrak{p}_n = \frac{P_n(x)}{\left\langle \U, P_{n}^2(x) \right\rangle} \U, \quad D_{\nu^*}^m\mathfrak{e}_{n,\nu} = (-1)^m \eta_{n,m,\nu} \mathfrak{p}_{n+m}, \quad n\geq 0,
\end{equation}
where $\{\mathfrak{e}_{n,\nu}\}_{n\geq0}$ is the dual basis of monic polynomials $\{P^{[m,\nu]}_{n}(x)\}_{n\geq0}$ given by
\begin{equation*}
 P^{[m,\nu]}_{n}(x):=\frac{D_\nu^m P_{n+m}(x)}{\eta_{n,m,\nu}}, \quad\text{with}\quad \eta_{n,m,\omega}:=(n+1)_m, \quad \eta_{n,m,q}:=\frac{(q^{n+1};q)_m}{(1-q)^m}.
\end{equation*}

A regular linear functional $\U\in\mathbb{P}^{\prime}$ is called \emph{$D_{\nu}$-semiclassical} linear functional (see e.g. \cite{Marcellan_Salto_1998} for $\nu=\omega$,
\cite{Kheriji_2003} for $\nu=q$) if it is regular and there exist $\sigma,\tau\in\mathbb{P}$, with $\deg(\tau(x))\geq1$, such that
\begin{equation}\label{pearson}
 D_{\nu}(\sigma(x) \U)=\tau(x) \U.
\end{equation}
In this way, the \emph{class} of $\U$ is $s:=\min\max\left\{{\rm deg}\,\sigma-2,{\rm deg}\,\tau-1\right\}\in\mathbb{N}\cup\{0\}$, where the minimum is taken among all pairs of polynomials $(\sigma,\tau)$, with $\deg(\tau(x))\geq1$, satisfying \eqref{pearson}. When $s=0$, $\U$ is called a \emph{$D_{\nu}$-classical} functional. Besides, the corresponding SMOP is said to be $D_{\nu}$-semiclassical of class $s$, or $D_{\nu}$-classical, respectively.

\begin{proposition} \label{U-semi-D-D*}
 The following equivalence hold
 \begin{gather*}
  D_\nu\left[\sigma(x)\U\right] = \tau(x)\U \quad \Longleftrightarrow \quad D_{\nu^*}\left[\widetilde{\sigma}(x)\U\right] = \tau(x)\U,
 \end{gather*}
 where
 $$
 \widetilde{\sigma}(x):=\left\{
 \begin{array}{ccl}
 \sigma(x) + \omega\tau(x) & \mbox{\rm if} & \nu=\omega\, , \\
 q\sigma(x) + (q-1)x\tau(x) & \mbox{\rm if} & \nu=q\, .
 \end{array}
 \right.
 $$
 Thus, $\U$ is $D_\nu$-semiclassical if and only if it is $D_{\nu^*}$-semiclassical.
\end{proposition}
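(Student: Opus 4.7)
My plan is to reduce both Pearson equations to equivalent ``vanishing against every test polynomial'' conditions via the defining duality, and then intertwine the two conditions by a natural automorphism of $\mathbb{P}$ (the shift by $\omega$ in the additive case, the dilation by $q$ in the multiplicative case). Concretely, from $\langle D_\nu\U, p\rangle = -\langle \U, D_{\nu^*}p\rangle$, the equation $D_\nu[\sigma(x)\U] = \tau(x)\U$ is equivalent to
\begin{equation*}
 \langle \U,\, \sigma(x)(D_{\nu^*}p)(x) + \tau(x)p(x)\rangle = 0, \quad \forall\, p\in\mathbb{P},
\end{equation*}
while $D_{\nu^*}[\widetilde{\sigma}(x)\U] = \tau(x)\U$ translates to $\langle \U,\, \widetilde{\sigma}(x)(D_{\nu}p)(x)+\tau(x)p(x)\rangle=0$ for every $p$. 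The task is thus to show these two vanishing conditions agree.

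In the additive case $\nu=\omega$, I would exploit the elementary identity $\omega (D_\omega p)(x) = p(x+\omega)-p(x)$ together with $\widetilde{\sigma}=\sigma+\omega\tau$ to rewrite
\begin{equation*}
 \widetilde{\sigma}(x)(D_\omega p)(x) + \tau(x)p(x) \;=\; \sigma(x)(D_\omega p)(x) + \tau(x)p(x+\omega).
\end{equation*}
Since the shift $T_\omega\colon p(x)\mapsto p(x+\omega)$ is a bijection of $\mathbb{P}$ and a direct computation gives $D_{-\omega}(T_\omega p) = D_\omega p$, replacing $p$ by $T_\omega p$ in the first vanishing condition produces precisely the second, with the reverse substitution delivering the converse.

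The multiplicative case $\nu=q$ follows the same blueprint: using $(q-1)x (D_q p)(x) = p(qx)-p(x)$ and $\widetilde{\sigma}=q\sigma+(q-1)x\tau$ one checks
\begin{equation*}
 \widetilde{\sigma}(x)(D_q p)(x) + \tau(x)p(x) \;=\; q\sigma(x)(D_q p)(x) + \tau(x)p(qx),
\end{equation*}
while the dilation $R_q\colon p(x)\mapsto p(qx)$ is an automorphism of $\mathbb{P}$ satisfying $D_{1/q}(R_q p) = q\,D_q p$, so substituting $p$ by $R_q p$ again turns one condition into the other. The final assertion of the proposition --- equivalence of $D_\nu$- and $D_{\nu^*}$-semiclassicality of $\U$ --- is then immediate, since the recipe $(\sigma,\tau)\mapsto(\widetilde{\sigma},\tau)$ keeps $\tau$ unchanged (hence preserves the condition $\deg\tau\geq 1$) and can be read in either direction. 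There is no substantive obstacle here; the only care required is the bookkeeping of how $T_\omega$ and $R_q$ act on $D_\omega$ and $D_q$, which is exactly what forces the asymmetric form of $\widetilde{\sigma}$ in the two cases.
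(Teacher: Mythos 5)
Your proof is correct and complete. The paper itself omits the proof of this proposition (it is declared ``straightforward''), so there is nothing to compare line by line; but the route the authors presumably had in mind is the direct distributional computation, i.e.\ expanding $D_{\nu^*}\bigl[\widetilde{\sigma}(x)\U\bigr]$ via the product rules \eqref{dw-pu}--\eqref{q-dw-pu} (with $m=1$ and $\nu$ replaced by $\nu^*$) and comparing with $D_\nu\left[\sigma(x)\U\right]=\tau(x)\U$. Your argument instead works entirely on the test-function side: you convert both Pearson equations into the vanishing conditions $\langle \U,\,\sigma(x)(D_{\nu^*}p)(x)+\tau(x)p(x)\rangle=0$ and $\langle \U,\,\widetilde{\sigma}(x)(D_{\nu}p)(x)+\tau(x)p(x)\rangle=0$ for all $p\in\mathbb{P}$, and intertwine them through the automorphisms $T_\omega$ and $R_q$ of $\mathbb{P}$. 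I checked the two key identities, $\widetilde{\sigma}(x)(D_\omega p)(x)+\tau(x)p(x)=\sigma(x)(D_\omega p)(x)+\tau(x)p(x+\omega)$ and $\widetilde{\sigma}(x)(D_q p)(x)+\tau(x)p(x)=q\sigma(x)(D_q p)(x)+\tau(x)p(qx)$, as well as the intertwining relations $D_{-\omega}(T_\omega p)=D_\omega p$ and $D_{q^{-1}}(R_q p)=q\,D_q p$; all are correct, and since $T_\omega$ and $R_q$ are bijections of $\mathbb{P}$ the substitution argument gives a genuine equivalence in both directions. The concluding statement about semiclassicality is also fine, because the passage $(\sigma,\tau)\mapsto(\widetilde{\sigma},\tau)$ fixes $\tau$ (so $\deg\tau\geq1$ is preserved) and is invertible in $\sigma$ for fixed $\tau$. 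What your approach buys is that it avoids manipulating the two \emph{a priori} different functionals $D_\nu\U$ and $D_{\nu^*}\U$ altogether, replacing that bookkeeping by a single change of test polynomial; the direct computation, by contrast, produces the explicit formula for $\widetilde{\sigma}$ somewhat more mechanically. Either way the result stands.
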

 The proof of this proposition is straightforward and will be omitted.

\begin{proposition} \label{U-semi-lin}
 If the regular linear functionals $\U,\V$ are related by
\begin{equation}\label{rel-lin}
 p(x)\U = r(x)\V,\qquad p,r\in\mathbb{P}\setminus\{0\},
\end{equation}
then, $\U$ is $D_{\nu}$-semiclassical (respectively $D_{\nu^*}$-semiclassical) if and only if $\V$ also is
$D_{\nu}$-semiclassical (respectively $D_{\nu^*}$-semiclassical). Moreover,
if the class of $\U$ is $s$, then the class of $\V$ is at most $s+\mathrm{deg}(p(x))+\mathrm{deg}(r(x))$.
\end{proposition}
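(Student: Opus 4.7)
The plan is to transfer the Pearson-type equation of $\U$ to one for $\V$ by combining \eqref{rel-lin} with the Leibniz rules \eqref{dw-pu}--\eqref{q-dw-pu}. Since $\U$ is $D_\nu$-semiclassical of class $s$, fix $\sigma,\tau\in\mathbb{P}$ with $\deg\tau\geq1$ and $s=\max\{\deg\sigma-2,\,\deg\tau-1\}$ such that $D_\nu(\sigma\U)=\tau\U$. The aim is to produce $\widetilde\sigma,\widetilde\tau\in\mathbb{P}$ satisfying $D_\nu(\widetilde\sigma\V)=\widetilde\tau\V$ with $\deg\widetilde\sigma=\deg p+\deg r+\deg\sigma$ and $\deg\widetilde\tau\leq\deg p+\deg r+\max\{\deg\sigma-1,\deg\tau\}$, which immediately gives the class bound $\max\{\deg\widetilde\sigma-2,\deg\widetilde\tau-1\}\leq s+\deg p+\deg r$. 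Because $p$ and $r$ play symmetric roles in \eqref{rel-lin}, the converse implication follows by the same argument with $p$ and $r$ interchanged. The case $\nu=\omega$ is treated below; the case $\nu=q$ is parallel, with shifts $x\mapsto x+\omega$ replaced by dilations $x\mapsto qx$ and the extra $q$-factors from \eqref{q-dw-pu}.

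First, apply $D_\omega$ to both sides of $p\U=r\V$ via the $m=1$ case of \eqref{dw-pu}:
\begin{equation*}
 p(x+\omega)\,D_\omega\U+(D_\omega p)(x)\,\U=r(x+\omega)\,D_\omega\V+(D_\omega r)(x)\,\V.
\end{equation*}
The Pearson equation for $\U$ unfolds in the same way to $\sigma(x+\omega)\,D_\omega\U=(\tau-D_\omega\sigma)\,\U$; multiplying the preceding display by $\sigma(x+\omega)$ therefore eliminates $D_\omega\U$, yielding $A(x)\,\U=\sigma(x+\omega)\,r(x+\omega)\,D_\omega\V+\sigma(x+\omega)(D_\omega r)\,\V$ for an explicit $A\in\mathbb{P}$. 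Next, multiply through by $p(x)$ and substitute $p\U=r\V$ to convert the $\U$-side into a $\V$-expression, arriving at
\begin{equation*}
 p(x)\,\sigma(x+\omega)\,r(x+\omega)\,D_\omega\V=C(x)\,\V
\end{equation*}
for some $C\in\mathbb{P}$. Finally, with $\widehat p(x):=p(x-\omega)$, \eqref{dw-pu} applied in reverse gives the identity $p(x)(\sigma r)(x+\omega)\,D_\omega\V=D_\omega\bigl(\widehat p\,\sigma\,r\,\V\bigr)-D_\omega\bigl(\widehat p\,\sigma\,r\bigr)\,\V$, so the Pearson equation $D_\omega(\widetilde\sigma\V)=\widetilde\tau\V$ holds with $\widetilde\sigma=\widehat p\,\sigma\,r$ and $\widetilde\tau=C+D_\omega(\widehat p\,\sigma\,r)$.

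A direct degree inspection of this expression for $\widetilde\tau$ confirms the announced bound, giving the class inequality. To ensure that $\V$ is genuinely $D_\nu$-semiclassical, i.e., $\deg\widetilde\tau\geq1$: if $\widetilde\tau\equiv0$ then $D_\omega(\widetilde\sigma\V)=0$, and surjectivity of $D_{\nu^*}$ on $\mathbb{P}$ (hence injectivity of $D_\nu$ on $\mathbb{P}^{\prime}$) forces $\widetilde\sigma\V=0$, contradicting regularity of $\V$ since $\widetilde\sigma=\widehat p\,\sigma\,r\neq0$; and if $\widetilde\tau$ were a nonzero constant $c$, pairing $D_\omega(\widetilde\sigma\V)=c\V$ with the polynomial $1$ would give $c\,\langle\V,1\rangle=0$, again contradicting regularity. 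The main obstacle is precisely the bookkeeping imposed by \eqref{dw-pu}/\eqref{q-dw-pu}: in contrast to the differential case, Leibniz intertwines multiplication by a polynomial with a shift (respectively a $q$-dilation), so each elimination of $D_\nu\U$ generates a shifted copy of the polynomials already present, which must be carefully propagated through the remaining substitutions; Proposition~\ref{U-semi-D-D*} may be invoked at any step to swap $D_\nu$ with $D_{\nu^*}$ whenever the resulting expressions become more tractable.
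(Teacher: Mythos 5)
Your argument is correct and is essentially the paper's own proof: both transfer the Pearson equation $D_\omega(\sigma\U)=\tau\U$ to $\V$ through \eqref{rel-lin} and the Leibniz rule \eqref{dw-pu}, and both land on the identical pair with $\widetilde{\sigma}(x)=p(x-\omega)\sigma(x)r(x)$ (the paper just packages the elimination of $D_\omega\U$ into the single computation of $D_\omega\left[p(x-\omega)\sigma(x)r(x)\V\right]$, whereas you eliminate it step by step). Your extra verification that $\deg\widetilde{\tau}\geq1$ is a worthwhile addition that the paper omits.
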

\begin{proof}
 Let us suppose that $\U$ is a $D_\omega$-semiclassical linear functional given by \eqref{pearson}, then $\V$ satisfies
 \begin{multline*}
  D_\omega\left[p(x-\omega)\sigma(x)r(x)\V\right] \overset{\eqref{rel-lin}}{=} D_\omega\left[p(x-\omega)p(x)\sigma(x)\U\right] \\
  \overset{\eqref{dw-pu}}{=} p(x)p(x+\omega)D_\omega\left[\sigma(x)\U\right] + \bigl\{ p(x)D_\omega[p(x)] + p(x)D_\omega[p(x-\omega)] \bigr\}\sigma(x)\U \\
  \underset{\eqref{rel-lin}}{\overset{\eqref{pearson}}{=}} \Bigl(p(x+\omega)\tau(x) + D_\omega\bigl[p(x)+p(x-\omega)\bigr]\sigma(x) \Bigr)r(x)\V.
 \end{multline*}

Therefore, $\V$ is also $D_{\omega}$-semiclassical and the class of $\V$ is at most $s+\mathrm{deg}(p(x))+\mathrm{deg}(r(x))$. The $D_{\nu^*}$-semiclassical character of $\V$ follows from Proposition \ref{U-semi-D-D*}.

The proof of the $q$-case is similar but using \eqref{q-dw-pu} instead of \eqref{dw-pu},
and in this case $\V$ satisfies
\begin{equation*}
 D_q\bigl[p(q^{-1}x)\sigma(x)r(x)\V\bigr]  = \Bigl(p(qx)\tau(x) + qD_q\bigl[p(x)+p(q^{-1}x)\bigr]\sigma(x) \Bigr)r(x)\V.
\end{equation*}
\end{proof}

A characterization of $D_\nu$-semiclassical linear functionals is the following
\begin{proposition}[\cite{Abdelkarim_Maroni_1997, Maroni_1991, Maroni_1999}] \label{prop2.3}
 Let $\{P_{n}(x)\}_{n\geq0}$ be a SMOP with respect to a linear functional $\U$ and let $\sigma(x)$ be a monic polynomial. $\U$ satisfies \eqref{pearson} if and only if there exists an integer $s\geq0$ such that
 \begin{equation*}
  \sigma(x)P^{[1,\nu^*]}_{n}(x) = \sum_{j=n-s}^{n+\deg(\sigma(x))} \lambda_{j,n}P_j(x), \,\, n\geq s, \quad\text{and}\quad \lambda_{n-s,n}\neq0, \,\, n\geq s+1.
 \end{equation*}
Equivalently,
 \begin{equation}\label{eq-prop2.3}
  \widetilde{\sigma}(x)P^{[1,\nu]}_{n}(x) = \sum_{j=n-s}^{n+\deg(\widetilde\sigma(x))} \widetilde{\lambda}_{j,n}P_j(x), \,\, n\geq s, \quad\text{and}\quad \widetilde{\lambda}_{n-s,n}\neq0, \,\, n\geq s+1.
 \end{equation}
In these equations, $\sigma(x)$ and $\widetilde{\sigma}$(x) are the polynomials appearing in Proposition \ref{U-semi-D-D*}, and $\lambda_{j,n}$ and $\widetilde{\lambda}_{j,n}$ are complex parameters for all $n$ and $j$.
\end{proposition}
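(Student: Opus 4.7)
The plan hinges on the duality between the SMOP $\{P_n(x)\}_{n\geq 0}$ and the monic basis $\{P_n^{[1,\nu^*]}(x)\}_{n\geq 0}$ of first $D_{\nu^*}$-derivatives. Their dual bases $\{\mathfrak{p}_n\}_{n\geq 0}$ and $\{\mathfrak{e}_{n,\nu^*}\}_{n\geq 0}$ are linked by the identities $\mathfrak{p}_n = \xi_n^{-1}P_n\U$ (with $\xi_n := \langle\U, P_n^2\rangle$) and $D_\nu\mathfrak{e}_{n,\nu^*} = -\eta_{n,1,\nu^*}\mathfrak{p}_{n+1}$ recorded in \eqref{eq relacion base dual con U, y Dnu*^m e_n,nu = (-1)^m eta_n,nu p_n+m}. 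Combined with the Leibniz rules \eqref{dw-pu}--\eqref{q-dw-pu}, these will convert the Pearson equation \eqref{pearson} into the banded expansion of $\sigma(x)P_n^{[1,\nu^*]}(x)$ in the basis $\{P_j(x)\}$, and vice versa.

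For the direct implication, rewrite the expansion coefficients by duality as $\lambda_{j,n} = \langle \mathfrak{p}_j, \sigma P_n^{[1,\nu^*]}\rangle = \langle \sigma\mathfrak{p}_j, P_n^{[1,\nu^*]}\rangle$, so that formally in $\mathbb{P}'$ we have $\sigma\mathfrak{p}_j = \sum_n \lambda_{j,n}\mathfrak{e}_{n,\nu^*}$. Applying $D_\nu$ to both sides gives $-\sum_n \lambda_{j,n}\eta_{n,1,\nu^*}\mathfrak{p}_{n+1} = D_\nu(\sigma\mathfrak{p}_j) = \xi_j^{-1}D_\nu(P_j\,\sigma\U)$. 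Expanding $D_\nu(P_j\,\sigma\U)$ by the Leibniz rule and invoking the Pearson equation $D_\nu(\sigma\U) = \tau\U$ writes the right-hand side as $\xi_j^{-1}R_j(x)\U$ for an explicit polynomial $R_j$ of degree at most $j + s + 1$, where $s := \max\{\deg\sigma-2, \deg\tau-1\}$. Substituting $\mathfrak{p}_{n+1} = \xi_{n+1}^{-1}P_{n+1}\U$ and cancelling $\U$ (which is valid because $\U$ is regular) yields a polynomial identity of degree at most $j + s + 1$, forcing $\lambda_{j,n} = 0$ whenever $n > j + s$, i.e.\ $j < n - s$; sharpness $\lambda_{n-s,n}\neq 0$ will follow by reading off the coefficient of $x^{j+s+1}$ in $R_j(x)$ at $j = n-s$.

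For the converse, expand $D_\nu(\sigma\U) = \sum_{n\geq 0} c_n\mathfrak{p}_n$: for $n\geq 1$ one has $c_n = \langle D_\nu(\sigma\U), P_n\rangle = -\langle \U, \sigma D_{\nu^*}P_n\rangle = -\eta_{n-1,1,\nu^*}\langle \U, \sigma P_{n-1}^{[1,\nu^*]}\rangle$. Plugging in the assumed banded expansion of $\sigma P_{n-1}^{[1,\nu^*]}$ and using the orthogonality $\langle \U, P_j\rangle = u_0\delta_{j,0}$, this vanishes as soon as $0\notin[n-1-s, n-1+\deg\sigma]$, that is, for $n\geq s+2$. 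Hence $D_\nu(\sigma\U) = \sum_{n=0}^{s+1} c_n\mathfrak{p}_n$ is a finite sum, which on substituting $\mathfrak{p}_n = \xi_n^{-1}P_n\U$ becomes $\tau(x)\U$ with $\tau(x) := \sum_{n=0}^{s+1} \xi_n^{-1} c_n P_n(x)$. The constraint $\deg\tau\geq 1$ is automatic: $D_\nu$ is injective on $\mathbb{P}'$ (because $D_{\nu^*}$ is surjective on $\mathbb{P}$), so $\tau = 0$ would yield $\sigma\U = 0$ contradicting the regularity of $\U$; and $\tau$ cannot be a nonzero constant since $\langle D_\nu(\sigma\U), 1\rangle = 0$ forces $\tau u_0 = 0$. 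The equivalent form with $\widetilde\sigma$ and $P_n^{[1,\nu]}$ follows by running the same argument on the equivalent equation $D_{\nu^*}(\widetilde\sigma\U) = \tau\U$ supplied by Proposition \ref{U-semi-D-D*}.

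The main obstacle is the sharpness $\lambda_{n-s,n}\neq 0$ in the direct implication. The coefficient of $x^{j+s+1}$ in $R_j(x)$ combines the leading coefficient of $\tau$ (active when $\deg\tau = s+1$) with an integer multiple of the leading coefficient of $\sigma$ (active when $\deg\sigma = s+2$, coming from $D_\nu P_j$). Three sub-cases need separate treatment---$\deg\sigma-2 > \deg\tau-1$, $\deg\sigma-2 < \deg\tau-1$, and the equality (boundary) case---and in the last of these, cancellation at isolated values of $j$ must be excluded by invoking the minimality in the definition of the class of $\U$.
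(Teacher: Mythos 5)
The paper does not actually prove this proposition: it is quoted from the literature (Abdelkarim--Maroni and Maroni's papers cited in its header), so there is no internal proof to compare against. Your duality scheme is the standard route in those references, and most of it is sound: the identification $\lambda_{j,n}=\langle\mathfrak{p}_j,\sigma P_n^{[1,\nu^*]}\rangle$, the expansion $\sigma\mathfrak{p}_j=\sum_n\lambda_{j,n}\mathfrak{e}_{n,\nu^*}$, the use of $D_\nu\mathfrak{e}_{n,\nu^*}=-\eta_{n,1,\nu^*}\mathfrak{p}_{n+1}$ together with the Leibniz rules and \eqref{pearson} to bound $\deg R_j\le j+s+1$ and hence obtain the band $\lambda_{j,n}=0$ for $j<n-s$; and the converse, where $c_n=-\eta_{n-1,1,\nu^*}\langle\U,\sigma P_{n-1}^{[1,\nu^*]}\rangle=0$ for $n\ge s+2$, and injectivity of $D_\nu$ on $\mathbb{P}^{\prime}$ (from surjectivity of $D_{\nu^*}$ on $\mathbb{P}$) gives $\tau\neq0$, hence $\deg\tau\ge1$. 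These parts I would accept, modulo the standing assumption that $q$ is not a root of unity so that the $\eta$'s do not vanish.

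The genuine gap is exactly where you flag it, and your proposed repair does not work as stated. In the boundary case $\deg\sigma-2=\deg\tau-1=s$ the coefficient of $x^{j+s+1}$ in $R_j$ is $\mathrm{lc}(\tau)+j$ for $\nu=\omega$ (resp.\ $\mathrm{lc}(\tau)$ plus a $q$-integer for $\nu=q$), and this can vanish at one value of $j$, killing $\lambda_{n-s,n}$ at the single index $n=j+s$. You cannot then pass to $s+1$ (the band you already proved forces $\lambda_{n-s-1,n}=0$ identically) nor to $s-1$, so no integer $s$ would satisfy the displayed condition for that pair $(\sigma,\tau)$. ``Invoking the minimality in the definition of the class'' is not available here: the proposition fixes $\sigma$ and only assumes \eqref{pearson} holds for it, not that $(\sigma,\tau)$ realizes the class of $\U$; and even under a minimality hypothesis, converting minimality into the non-vanishing $\mathrm{lc}(\tau)+j\neq0$ --- i.e., showing that the degenerate configuration forces the class to drop, via the discrete analogue of Maroni's criterion at the zeros of $\sigma$ --- is precisely the non-trivial content of the cited results, and it is absent from your plan. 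Until that sub-case is handled (or the statement is read with $s$ equal to the class and $(\sigma,\tau)$ the minimal admissible pair), the direct implication is not proved.
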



\section{Main results}

\begin{definition}
 A pair of regular linear functionals $(\U,\V)$ is said to be a \emph{$(M,N)$-$D_{\nu}$-coherent pair of order $(m,k)$}, with fixed $M,N,m,k\in\mathbb{N}\cup\{0\}$, if their corresponding SMOP $\{P_n(x)\}_{n\geq0}$ and $\{Q_n(x)\}_{n\geq0}$ satisfy
 \begin{equation} \label{eq (M,N)-Dnu-coherence order (m,k) with [], Dnu=Dw or Dnu=Dq}
  P^{[m,\nu]}_{n}(x) + \sum_{i=1}^M a_{i,n} P^{[m,\nu]}_{n-i}(x) = Q^{[k,\nu]}_{n}(x) + \sum_{i=1}^N b_{i,n} Q^{[k,\nu]}_{n-i}(x), \quad n\geq 0,
 \end{equation}
 where $a_{i,n},b_{i,n}\in\mathbb{C}$, $a_{M,n}\neq0$ for $n\geq M$, $b_{N,n}\neq0$ for $n\geq N$, and $a_{i,n}=b_{i,n}=0$ if $i>n$. In addition, $(\U,\V)$ is said to be a \emph{$(M,N)$-$D_{\nu}$-coherent
  pair of order $m$} if it is a $(M,N)$-$D_{\nu}$-coherent pair of order $(m,0)$.
\end{definition}

In the next theorems, we state the $D_\nu$-analogue results obtained in \cite{Jesus_Marcellan_Petronilho_PinzonCortes201X, Jesus_Petronilho_2008, Marcellan_PinzonCortes_2012},
and we generalize the results stated in \cite{Area_Godoy_Marcellan_2000, Area_Godoy_Marcellan_2003, Kwon_Lee_Marcellan_2004, Marcellan_PinzonCortes_2012_Dw} for $\nu=\omega$, and in \cite{Area_Godoy_Marcellan_2002, Marcellan_PinzonCortes_2012_Dq} for $\nu=q$, respectively.
Moreover, we give a complete description of the $D_\nu$-semiclassical discrete orthogonal polynomials
in the framework of $(M,N)$-$D_\nu$-coherence of order $(m,k)$.

\begin{theorem} \label{main-teo}
 Let $(\U,\V)$ be a $(M,N)$-$D_{\nu}$-coherent pair of order $(m,k)$ given by \eqref{eq (M,N)-Dnu-coherence order (m,k) with [], Dnu=Dw or Dnu=Dq} with $m\geq k$. Let $\mathcal{L}_{M+N}=[l_{i,j}]_{i,j=0}^{M+N-1}$ be the following squared matrix of order $M+N$
 \begin{equation} \label{eq Matrix L_M+N, Dw y Dq}
  l_{i,j} = \left\{
   \begin{array}{ll}
    a_{j-i,j} & \text{if} \quad 0\leq i\leq N-1 \quad \text{and} \quad i\leq j\leq M+i, \\
    b_{j-i+N,j} & \text{if} \quad N\leq i\leq M+N-1 \quad \text{and} \quad i-N\leq j\leq i, \\
    0 & \text{otherwise,}
   \end{array}\right.
 \end{equation}
 with $a_{0,j_1}=b_{0,j_2}=1$, $0\leq j_1\leq N-1$, $0\leq j_2\leq M-1$. If $\det(\mathcal{L}_{M+N})\neq0$, then there exist polynomials $\phi_{M+k+n}(x;\nu)$ and $\psi_{N+m+n}(x;\nu)$, of
degrees $M+k+n$ and $N+m+n$, respectively,  such that
\begin{gather}
  D_{\nu^*}^{m-k}[\phi_{M+k+n}(x;\nu)\V]  = \psi_{N+m+n}(x;\nu)\U, \quad n\geq 0,
\label{eq Dnu^m-k[phiV]=psiU, Dw y Dq}
\end{gather}
and there exist polynomials $\varphi(x;\nu)$ and $\rho(x;\nu)$ such that
\begin{gather}
  \varphi(x;\nu)\U  =\rho(x;\nu)\V.
\label{rat-mod-uv}
\end{gather}

Furthermore
\begin{enumerate}
\item If $k=m$ then $\U$ is a $D_{\nu}$-semiclassical linear functional if and only if so is
$\V$.
\item If $m>k$, then $\U$ and $\V$ are both $D_{\nu}$-semiclassical linear functionals.
\end{enumerate}
\end{theorem}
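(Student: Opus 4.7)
The plan is to work in the dual space $\mathbb{P}'$, transport the coherence identity \eqref{eq (M,N)-Dnu-coherence order (m,k) with [], Dnu=Dw or Dnu=Dq} into a relation between dual bases, invert the resulting linear system using the hypothesis $\det(\mathcal{L}_{M+N})\neq 0$, and finally apply $D_{\nu^*}^{m}$ (through \eqref{eq relacion base dual con U, y Dnu*^m e_n,nu = (-1)^m eta_n,nu p_n+m}) to transfer the identity back to a relation between $\U$ and $\V$.

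First I would introduce the common monic basis
$$C_n(x):=P_{n}^{[m,\nu]}(x)+\sum_{i=1}^{M}a_{i,n}P_{n-i}^{[m,\nu]}(x)=Q_{n}^{[k,\nu]}(x)+\sum_{i=1}^{N}b_{i,n}Q_{n-i}^{[k,\nu]}(x),$$
with dual basis $\{\mathfrak{c}_n\}_{n\geq 0}$. Denoting by $\mathfrak{e}_{n,\nu}^{P}$ and $\mathfrak{e}_{n,\nu}^{Q}$ the dual bases of $\{P_{n}^{[m,\nu]}\}$ and $\{Q_{n}^{[k,\nu]}\}$, a direct computation of dual pairings yields
$$\mathfrak{e}_{n,\nu}^{P}=\sum_{i=0}^{M}a_{i,n+i}\,\mathfrak{c}_{n+i},\qquad \mathfrak{e}_{n,\nu}^{Q}=\sum_{i=0}^{N}b_{i,n+i}\,\mathfrak{c}_{n+i},\qquad n\geq 0,$$
with the convention $a_{0,\cdot}=b_{0,\cdot}=1$. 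Gathering, for each $n\geq 0$, the $N$ equations for $\mathfrak{e}_{n,\nu}^{P},\dots,\mathfrak{e}_{n+N-1,\nu}^{P}$ and the $M$ equations for $\mathfrak{e}_{n,\nu}^{Q},\dots,\mathfrak{e}_{n+M-1,\nu}^{Q}$ produces a $(M+N)\times(M+N)$ linear system for $\mathfrak{c}_{n},\dots,\mathfrak{c}_{n+M+N-1}$ whose coefficient matrix, for $n=0$, is exactly the one defined by \eqref{eq Matrix L_M+N, Dw y Dq}. Its nonsingularity together with the banded structure lets us solve by Cramer's rule and extract each $\mathfrak{c}_{n+j}$ as an explicit linear combination of the $\mathfrak{e}^{P}$'s and $\mathfrak{e}^{Q}$'s.

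Next I would apply $D_{\nu^*}^{m}$ to the relation $\mathfrak{e}_{j,\nu}^{P}=\sum_{i} a_{i,j+i}\mathfrak{c}_{j+i}$. By \eqref{eq relacion base dual con U, y Dnu*^m e_n,nu = (-1)^m eta_n,nu p_n+m} the left-hand side becomes $(-1)^{m}\eta_{j,m,\nu}(P_{j+m}/\langle\U,P_{j+m}^{2}\rangle)\U$; splitting $D_{\nu^*}^{m}=D_{\nu^*}^{m-k}D_{\nu^*}^{k}$ on the $Q$-side converts each analogous $\mathfrak{e}^{Q}$-contribution into $(-1)^{k}\eta_{j,k,\nu}D_{\nu^*}^{m-k}\bigl[(Q_{j+k}/\langle\V,Q_{j+k}^{2}\rangle)\V\bigr]$. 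Grouping the coefficients of $\U$ into one polynomial and the integrands of $D_{\nu^*}^{m-k}$ into another yields precisely \eqref{eq Dnu^m-k[phiV]=psiU, Dw y Dq}, and a degree count of the Cramer expressions gives the announced degrees $N+m+n$ and $M+k+n$. To derive \eqref{rat-mod-uv} I would distinguish two cases: if $m=k$ the operator $D_{\nu^*}^{m-k}$ is the identity and \eqref{eq Dnu^m-k[phiV]=psiU, Dw y Dq} at $n=0$ is already of the required form; if $m>k$ I would take \eqref{eq Dnu^m-k[phiV]=psiU, Dw y Dq} at two consecutive indices $n$ and $n+1$ and use the Leibniz formulas \eqref{dw-pu}--\eqref{q-dw-pu} to eliminate the intermediate derivatives $D_{\nu^*}^{j}\V$ with $0\leq j<m-k$, producing a single nonzero identity $\varphi(x;\nu)\U=\rho(x;\nu)\V$.

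Item~(1) of the semiclassical statement is then immediate from \eqref{rat-mod-uv} and Proposition~\ref{U-semi-lin}. For item~(2), multiplying \eqref{eq Dnu^m-k[phiV]=psiU, Dw y Dq} by $\varphi$ and substituting $\varphi\U=\rho\V$ on the right, and expanding $\varphi\,D_{\nu^*}^{m-k}[\phi\V]$ through \eqref{dw-pu}--\eqref{q-dw-pu} so as to consolidate every derivative acting on $\V$, one extracts a Pearson equation of the form \eqref{pearson} for $\V$, so that $\V$ is $D_\nu$-semiclassical; Proposition~\ref{U-semi-lin} then transfers the $D_\nu$-semiclassical character back to $\U$. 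The main obstacle I anticipate is the combinatorial bookkeeping in the case $m>k$: the shifts by $\omega$ (respectively rescalings by powers of $q$) present in \eqref{dw-pu}--\eqref{q-dw-pu} make the cancellations used to produce $\varphi,\rho$ and the subsequent extraction of a Pearson equation delicate, and one must verify that the polynomials obtained are nonzero and that $\deg\tau\geq 1$; the hypothesis $\det(\mathcal{L}_{M+N})\neq 0$ is what ultimately ensures this nondegeneracy.
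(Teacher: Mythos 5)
Your route is essentially the paper's: you pass to the dual bases of $\{P^{[m,\nu]}_n\}_{n\geq0}$, $\{Q^{[k,\nu]}_n\}_{n\geq0}$ and of the common combination $C_n$ (the paper's $R_n(x;\nu)$), read the coherence relation as the banded expansions $\mathfrak{e}^{P}_{n}=\sum_{i=0}^{M}a_{i,n+i}\mathfrak{c}_{n+i}$ and $\mathfrak{e}^{Q}_{n}=\sum_{i=0}^{N}b_{i,n+i}\mathfrak{c}_{n+i}$, invert the $n=0$ block using $\det(\mathcal{L}_{M+N})\neq0$ together with the recursion supplied by $a_{M,\cdot}\neq0$, and then apply $D_{\nu^*}^{m}=D_{\nu^*}^{m-k}D_{\nu^*}^{k}$ and \eqref{eq relacion base dual con U, y Dnu*^m e_n,nu = (-1)^m eta_n,nu p_n+m} to obtain \eqref{eq Dnu^m-k[phiV]=psiU, Dw y Dq}. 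Up to that point the proposal is sound and matches the paper, as does item (1) via Proposition \ref{U-semi-lin}.

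The gap is in the case $m-k\geq 2$. After the Leibniz expansion \eqref{dw-pu}--\eqref{q-dw-pu}, each instance of \eqref{eq Dnu^m-k[phiV]=psiU, Dw y Dq} is a single linear equation in the $m-k+1$ unknown functionals $\V, D_{\nu^*}\V,\dots,D_{\nu^*}^{m-k}\V$, so taking only the two indices $n$ and $n+1$ lets you eliminate one unknown; this suffices only when $m=k+1$. The paper instead takes the $m-k+1$ instances $n=0,\dots,m-k$, forming the polynomial matrix $\mathcal{T}_{m-k+1}(x;\nu)$, checks $\det(\mathcal{T}_{m-k+1}(x;\nu))\neq0$, and applies Cramer's rule. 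A second, related problem is your derivation of the Pearson equation in item (2): multiplying \eqref{eq Dnu^m-k[phiV]=psiU, Dw y Dq} by $\varphi$ and substituting $\varphi\U=\rho\V$ does not lower the order, because after the Leibniz expansion the left-hand side still contains every $D_{\nu^*}^{j}\V$ with $1\leq j\leq m-k$, and no regrouping of polynomial coefficients turns that into a first-order relation of the form \eqref{pearson}. The paper's remedy is to solve the same Cramer system a second time for $D_{\nu^*}\V$, obtaining $\rho(x;\nu)D_{\nu^*}\V=\varsigma(x;\nu)\U$; combining this with $\varphi(x;\nu)\U=\rho(x;\nu)\V$ and one application of the product rule to $D_{\nu^*}\bigl[\varphi\rho\,\V\bigr]$ (with the appropriate shift of arguments) yields a genuine first-order Pearson equation for $\V$, whence $\V$, and then $\U$ via Propositions \ref{U-semi-D-D*} and \ref{U-semi-lin}, is $D_\nu$-semiclassical. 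As written, your argument is complete only for $m=k+1$; you should also say explicitly why $\rho(x;\nu)\neq0$, since that nondegeneracy comes from the structure of $\mathcal{T}_{m-k+1}$ rather than from $\det(\mathcal{L}_{M+N})\neq0$.
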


\begin{proof}
 From \eqref{eq (M,N)-Dnu-coherence order (m,k) with [], Dnu=Dw or Dnu=Dq}, let $a_{0,n}=b_{0,n}=1$ and
 \begin{equation} \label{eq Rn(x;nu) = sum P^[m,nu] + sum Q^[k,nu], Dw y Dq}
  R_n(x;\nu) = \sum_{i=0}^M a_{i,n} P^{[m,\nu]}_{n-i}(x) = \sum_{i=0}^N b_{i,n} Q^{[k,\nu]}_{n-i}(x), \quad n\geq 0.
 \end{equation}
 Let us consider $\{\mathfrak{p}_{n}\}_{n\geq0}$, $\{\mathfrak{q}_{n}\}_{n\geq0}$, $\{\mathfrak{r}_{n,\nu}\}_{n\geq0}$, $\{\mathfrak{e}_{n,\nu}\}_{n\geq0}$ and $\{\mathfrak{h}_{n,\nu}\}_{n\geq0}$ be the dual bases of the SMOP $\{P_n(x)\}_{n\geq0}$, $\{Q_n(x)\}_{n\geq0}$ and the sequences $\{R_n(x;\nu)\}_{n\geq0}$, $\{P^{[m,\nu]}_n(x)\}_{n\geq0}$ and $\{Q^{[k,\nu]}_n(x)\}_{n\geq0}$, respectively. From
 \begin{align*}
  & \langle \mathfrak{e}_{n,\nu}, R_j(x;\nu) \rangle \overset{\eqref{eq Rn(x;nu) = sum P^[m,nu] + sum Q^[k,nu], Dw y Dq}}{=} \sum_{i=0}^M \langle \mathfrak{e}_{n,\nu}, a_{i,j} P^{[m,\nu]}_{j-i}(x) \rangle
  = \left\{
   \begin{array}{ll}
    a_{j-n,j} & \text{if} \quad n\leq j\leq n+M, \\
    0 & \text{ otherwise,}
   \end{array}
  \right. \\
  & \langle \mathfrak{h}_{n,\nu}, R_j(x;\nu) \rangle \overset{\eqref{eq Rn(x;nu) = sum P^[m,nu] + sum Q^[k,nu], Dw y Dq}}{=} \sum_{i=0}^N \langle \mathfrak{h}_{n,\nu}, b_{i,j} Q^{[k,\nu]}_{j-i}(x) \rangle
  = \left\{
   \begin{array}{ll}
    b_{j-n,j} & \text{if} \quad n\leq j\leq n+N, \\
    0 & \text{otherwise,}
   \end{array}
  \right.
 \end{align*}
 it follows that
 \begin{align}
  & \mathfrak{e}_{n,\nu} = \sum_{j\geq0} \langle \mathfrak{e}_{n,\nu}, R_j(x;\nu) \rangle \mathfrak{r}_{j,\nu} = \sum_{j=n}^{n+M} a_{j-n,j} \mathfrak{r}_{j,\nu}, \quad n\geq 0, \label{eq e_n,nu=sum a_ r_j,nu, Dw y Dq} \\
  & \mathfrak{h}_{n,\nu} = \sum_{j\geq0} \langle \mathfrak{h}_{n,\nu}, R_j(x;\nu) \rangle \mathfrak{r}_{j,\nu} = \sum_{j=n}^{n+N} b_{j-n,j} \mathfrak{r}_{j,\nu}, \quad n\geq 0. \label{eq h_n,nu=sum b_ r_j,nu, Dw y Dq}
 \end{align}
 Using \eqref{eq e_n,nu=sum a_ r_j,nu, Dw y Dq} and \eqref{eq h_n,nu=sum b_ r_j,nu, Dw y Dq} for $0\leq n\leq N-1$ and $0\leq n\leq M-1$, respectively, we set
 \begin{equation*}
  \mathcal{L}_{M+N} \left[
                      \begin{array}{c}
                        \mathfrak{r}_{0,\nu} \\ \vdots \\ \mathfrak{r}_{N-1,\nu} \\ \mathfrak{r}_{N,\nu} \\ \vdots \\ \mathfrak{r}_{N+M-1,\nu} \\
                      \end{array}
                    \right] =
                    \left[
                      \begin{array}{c}
                        \mathfrak{e}_{0,\nu} \\ \vdots \\ \mathfrak{e}_{N-1,\nu} \\ \mathfrak{h}_{0,\nu} \\ \vdots \\ \mathfrak{h}_{M-1,\nu} \\
                      \end{array}
                    \right],
 \end{equation*}
 where the matrix $\mathcal{L}_{M+N}$ is given by \eqref{eq Matrix L_M+N, Dw y Dq}. By assumption $\det(\mathcal{L}_{M+N})\neq0$, then we can solve this linear system and obtain, for $0\leq i\leq M+N-1$,
 \begin{equation} \label{eq r_i,nu = e_0,nu +...+ e_N-1,nu + h_0,nu +...+h_M-1,nu, Dw y Dq}
  \mathfrak{r}_{i,\nu} = \alpha_{i,0}\mathfrak{e}_{0,\nu} + \cdots + \alpha_{i,N-1}\mathfrak{e}_{N-1,\nu} + \alpha_{i,N}\mathfrak{h}_{0,\nu} + \cdots + \alpha_{i,N+M-1}\mathfrak{h}_{M-1,\nu},
 \end{equation}
 where $\alpha_{i,j}$, $0\leq j\leq N+M-1$, are some constants.
If,  for every $i\geq 0$, we multiply \eqref{eq e_n,nu=sum a_ r_j,nu, Dw y Dq} for $n=N+i$ by $b_{N,M+N+i}$, and \eqref{eq h_n,nu=sum b_ r_j,nu, Dw y Dq} for $n=M+i$ by $a_{M,M+N+i}$, and subtracting the resulting equations, we get
 \begin{multline} \label{eq e_N+i,nu - h_M+i,nu = r_min,nu +...+ r_M+N+i-1,nu, Dw y Dq}
  b_{N,M+N+i}\mathfrak{e}_{N+i,\nu} - a_{M,M+N+i}\mathfrak{h}_{M+i,\nu} \\
  = \beta_{1,i}\mathfrak{r}_{\min\{M,N\}+i,\nu} + \cdots + \beta_{\max\{M,N\},i}\mathfrak{r}_{M+N+i-1,\nu}, \quad i\geq 0,
 \end{multline}
 where $\beta_{j,i}$, $1\leq j\leq \max\{M,N\}$, $i\geq 0$, are constants. Additionally, for $t\geq 0$ fixed, using \eqref{eq e_n,nu=sum a_ r_j,nu, Dw y Dq} we can recursively obtain an expression for $\mathfrak{r}_{M+N+t,\nu}$ as a linear combination of $\mathfrak{r}_{i,\nu}$, $0\leq i\leq M+N-1$, and $\mathfrak{e}_{j,\nu}$, $N\leq j\leq N+t$, (since $a_{M,M+j}\neq0$, $N\leq j\leq N+t$). Hence, using \eqref{eq r_i,nu = e_0,nu +...+ e_N-1,nu + h_0,nu +...+h_M-1,nu, Dw y Dq}, \eqref{eq e_N+i,nu - h_M+i,nu = r_min,nu +...+ r_M+N+i-1,nu, Dw y Dq} becomes
 \begin{multline*}
  \tilde{\alpha}_{i,0}\mathfrak{e}_{0,\nu} + \cdots + \tilde{\alpha}_{i,N+i-1}\mathfrak{e}_{N+i-1,\nu} + b_{N,M+N+i}\mathfrak{e}_{N+i,\nu} \\
  = \tilde{\beta}_{i,0}\mathfrak{h}_{0,\nu} + \cdots + \tilde{\beta}_{i,M-1}\mathfrak{h}_{M-1,\nu} + a_{M,M+N+i}\mathfrak{h}_{M+i,\nu}, \quad i\geq 0,
 \end{multline*}
 where $\tilde{\alpha}_{i,j_1}$, $\tilde{\beta}_{i,j_2}$, for $0\leq j_1\leq N+i-1$, $0\leq j_2\leq M-1$, are constants. Applying the $m$th $D_{\nu}$-derivative $D^{m}_{\nu^*}$ and using \eqref{eq relacion base dual con U, y Dnu*^m e_n,nu = (-1)^m eta_n,nu p_n+m}, since $m\geq k$, we get
 \begin{multline*}
  \widehat{\alpha}_{i,0}\mathfrak{p}_{m} + \cdots + \widehat{\alpha}_{i,N+i-1}\mathfrak{p}_{N+i-1+m} + b_{N,M+N+i}(-1)^m \eta_{N+i,m,\nu}\mathfrak{p}_{N+i+m} = \\
  D_{\nu^*}^{m-k}\left[\widehat{\beta}_{i,0}\mathfrak{q}_{k} + \cdots + \widehat{\beta}_{i,M-1}\mathfrak{q}_{M-1+k} + a_{M,M+N+i} (-1)^k \eta_{M+i,k,\nu} \mathfrak{q}_{M+i+k}\right],
 \end{multline*}
 for $i\geq 0$. Therefore, from \eqref{eq relacion base dual con U, y Dnu*^m e_n,nu = (-1)^m eta_n,nu p_n+m} it follows \eqref{eq Dnu^m-k[phiV]=psiU, Dw y Dq} for all $n\geq 0$ with
 \begin{align*}
  & \phi_{M+k+n}(x;\nu) = (-1)^k \frac{\eta_{M+n,k,\nu} a_{M,M+N+n}}{\langle \V, Q_{M+k+n}^2(x) \rangle} x^{M+k+n} + \text{\small{\emph{lower degree terms}}}, 
  \\
  & \psi_{N+m+n}(x;\nu) = (-1)^m \frac{\eta_{N+n,m,\nu} b_{N,M+N+n}}{\langle \U, P_{N+m+n}^2(x) \rangle} x^{N+m+n} + \text{\small{\emph{lower degree terms}}}. 
 \end{align*}

Setting $k=m$ in equation \eqref{eq Dnu^m-k[phiV]=psiU, Dw y Dq} it follows that
$\U$ and ${\V}$ are connected by the rational modification \eqref{rat-mod-uv}
where $\rho(x;\nu)=\phi_{M+m+n}(x;\nu)$ and $\varphi(x;\nu)=\psi_{N+m+n}(x;\nu)$.
Therefore, by Proposition \ref{U-semi-lin},
$\U$ is a $D_{\nu}$-semiclassical linear functional if and only if so is $\V$.

Finally, let us consider $m>k$.  From \eqref{dw-pu} and \eqref{q-dw-pu},
\eqref{eq Dnu^m-k[phiV]=psiU, Dw y Dq} becomes, respectively, for each $n\geq 0$,
 \begin{gather*}
  \sum_{j=0}^{m-k} \binom{m-k}{j} \bigl(D^j_{-\omega} \phi_{M+k+n}\bigr)\bigl(x-(m-k-j)\omega;\omega\bigr) \, D^{m-k-j}_{-\omega}\V = \psi_{N+m+n}(x;\omega)\U, \\
  \sum_{j=0}^{m-k} \qbinom{m-k}{j}{q^{-1}} q^{-j}\bigl(D^j_{q^{-1}} \phi_{M+k+n}\bigr)\bigl(q^{-(m-k-j)}x;q\bigr) \, D^{m-k-j}_{q^{-1}}\V = \psi_{N+m+n}(x;q)\U.
 \end{gather*}
These equations, for $n=0,1,\ldots, m-k$, leads to the following systems (one for
$\nu=\omega$ and the other one for $\nu=q$) of functional linear equations
 \begin{equation*}
  \mathcal{T}_{m-k+1}(x;\nu) \left[
 \begin{array}{c}
  D_{\nu^*}^{m-k}\V \\ \vdots \\ D_{\nu^*}\V \\ \V \\
  \end{array}
 \right] = \left[
  \begin{array}{c}
 \psi_{N+m}(x;\nu)\U \\ \psi_{N+m+1}(x;\nu)\U \\ \vdots \\ \psi_{N+m+(m-k)}(x;\nu)\U \\
 \end{array}
 \right],
 \end{equation*}
 where $\det\left(\mathcal{T}_{m-k+1}(x;\nu)\right)\neq0$. Therefore, for $m>k$ we can solve
 the above systems with respect to $\V$ and $D_{\nu^*}\V$ (e.g., by using the Cramer's rule).
Solving it for $\V$ we obtain the relation \eqref{rat-mod-uv} where
$$
\rho(x;\nu) :=
 \det\left(\mathcal{T}_{m-k+1}(x;\nu)\right),
$$
 so that
 \begin{gather*}
  \rho(x;\omega) = \det\Biggl( \Bigl[ \bigl(D^i_{-\omega} \phi_{M+k+n}\bigr)\bigl(x-(m-k-i)\omega;\omega\bigr) \Bigr]_{i,n=0}^{m-k} \Biggr) \prod_{j=0}^{m-k} \binom{m-k}{j} \neq 0, \\
  \rho(x;q) = \det\Biggl( \Bigl[ \bigl(D^i_{q^{-1}} \phi_{M+k+n}\bigr)\bigl(q^{-(m-k-i)}x;q\bigr) \Bigr]_{i,n=0}^{m-k} \Biggr) \prod_{j=0}^{m-k} \qbinom{m-k}{j}{q^{-1}} q^{-j} \neq 0,
 \end{gather*}
and $\varphi(x;\nu)$ is a polynomial. In the same way, solving the system for
$D_{\nu^*}\V$ we obtain
$\rho(x;\nu) D_{\nu^*}\V = \varsigma(x;\nu)\U$, being $\varsigma(x;\nu)$ a polynomial. Thus,
 \begin{align*}
  D_{-\omega} & \left[\varphi(x+\omega;\omega)\rho(x+\omega;\omega)\V\right] 
  = \varphi(x;\omega)\varsigma(x;\omega)\U + D_{-\omega}\left[\varphi(x+\omega;\omega)\rho(x+\omega;\omega)\right]\V \\
  & = \left\{ \varsigma(x;\omega)\rho(x;\omega) + D_{-\omega}\left[\varphi(x+\omega;\omega)\rho(x+\omega;\omega)\right] \right\}\V, \\
  D_{q^{-1}} & \left[\varphi(qx;q)\rho(qx;q)\V\right]
  = \varphi(x;q)\varsigma(x;q)\U + q^{-1}D_{q^{-1}}\left[\varphi(qx;q)\rho(qx;q)\right]\V \\
  & = \left\{ \varsigma(x;q)\rho(x;q) + q^{-1}D_{q^{-1}}\left[\varphi(qx;q)\rho(qx;q)\right] \right\}\V, \\
 \end{align*}
 i.e.,  $\V$ is $D_{\nu^*}$-semiclassical linear functional. Then, using \eqref{rat-mod-uv}
 and Propositions \ref{U-semi-D-D*} and \ref{U-semi-lin} the result follows.
\end{proof}

\subsection{The special case $m=k+1$}

Let us consider now the special case when $m=k+1$. In this case Theorem \ref{main-teo}
gives that both $\U$ and $\V$ are $D_\nu$-semiclassical functionals and are connected by
the linear relation \eqref{rat-mod-uv}. Let us now discuss the inverse statement.

\begin{theorem}
 Let $\U$ and $\V$ be two $D_\nu$-semiclassical linear functionals related by a rational factor, i.e., there exist
 monic polynomials $\sigma(x)$ and $\varphi(x)$, and nonzero polynomials $\tau(x)$ and $\rho(x)$, such that
 \begin{gather*}
  D_{\nu^*}\left[\sigma(x)\V\right]=\tau(x)\V, \quad\text{and}\quad \varphi(x)\U = \rho(x)\V, \\
  \deg(\sigma(x))=\ell, \quad \deg(\tau(x))=t\geq1, \quad \deg(\varphi(x))=\jmath\,, \quad \deg(\rho(x))=r,
 \end{gather*}
 hold, and let $\{P_n(x)\}_{n\geq0}$ and $\{Q_n(x)\}_{n\geq0}$ be the SMOP associated with $\U$ and $\V$, respectively. Then,
 \begin{equation} \label{eq converse rational-semi impli coherence, Dw y Dq}
  \sum_{i=n-r-\ell}^{n+\jmath+\ell} a_{i,n} P_i^{[1,\nu]}(x) = \sum_{i=n-\jmath-s}^{n+\jmath+\ell} b_{i,n} Q_i(x),
 \end{equation}
 where $a_{n+\jmath+\ell,n}b_{n+\jmath+\ell,n}\neq0$, for $n\geq0$, and $s=\max\{\ell-2,t-1\}$. Therefore, $(\U,\V)$ is a $(\jmath+2\ell+r,2\jmath+\ell+s)$-$D_\nu$-coherent pair of order $1$.
\end{theorem}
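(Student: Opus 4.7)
The strategy is to combine the two hypotheses into a single coherence identity, first extracting two building-block identities and then bridging them via the operator $D_\nu$. From the Pearson equation $D_{\nu^*}[\sigma\V]=\tau\V$, applying Proposition \ref{prop2.3} to $\V$ (with the roles of $\nu$ and $\nu^*$ interchanged) yields
\[ \sigma(x)\,Q_n^{[1,\nu]}(x) \;=\; \sum_{l=n-s}^{n+\ell}\lambda_{l,n}\,Q_l(x),\quad n\ge s, \]
with $\lambda_{n-s,n}\ne 0$ for $n\ge s+1$ and $s=\max\{\ell-2,t-1\}$. From the rational modification $\varphi\U=\rho\V$, the coefficients of the expansion of $\rho(x)P_n(x)$ in the $\{Q_j(x)\}$-basis are $g_{j,n}=\langle\V,\rho P_n Q_j\rangle/\langle\V,Q_j^2\rangle=\langle\U,\varphi P_n Q_j\rangle/\langle\V,Q_j^2\rangle$; by the $\U$-orthogonality of $\{P_n(x)\}$ these vanish whenever $\deg(\varphi Q_j)<n$, i.e.\ $j<n-\jmath$, giving
\[ \rho(x)\,P_n(x) \;=\; \sum_{j=n-\jmath}^{n+r} g_{j,n}\,Q_j(x),\quad g_{n+r,n}\ne 0. \]

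To bridge $\{P_n^{[1,\nu]}(x)\}$ and $\{Q_j(x)\}$, apply $D_\nu$ to the second identity at index $n+1$, using the product rule \eqref{dw-pu} (resp.\ \eqref{q-dw-pu}) together with $D_\nu P_{m+1}=\eta_{m,1,\nu}P_m^{[1,\nu]}$ and $D_\nu Q_{m+1}=\eta_{m,1,\nu}Q_m^{[1,\nu]}$, so as to obtain an equation whose left-hand side contains $P_n^{[1,\nu]}(x)$ (multiplied by a shifted factor of $\rho$) plus a residual $P_{n+1}(x)$ term, and whose right-hand side is a linear combination of $\{Q_j^{[1,\nu]}(x)\}$. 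Multiplying through by $\sigma(x)$ and substituting the first identity for each $\sigma(x) Q_j^{[1,\nu]}(x)$ converts the right-hand side into a combination of $\{Q_l(x)\}$ supported in $[n-\jmath-s,\,n+r+\ell]$. The residual $P_{n+1}(x)$ on the left is then eliminated by multiplying by $\rho(x)$ and substituting the second identity for $\rho(x)P_{n+1}(x)$; after reexpanding the auxiliary products $\sigma(x)D_\nu[\rho](x) Q_j(x)$ and $\rho(x) Q_l(x)$ back in the $\{Q_m(x)\}$-basis via orthogonality, one consolidates into a single polynomial identity. An index shift placing the top $Q$-index at $n+\jmath+\ell$ yields
\[ \sum_{i=n-r-\ell}^{n+\jmath+\ell} a_{i,n}\, P_i^{[1,\nu]}(x) \;=\; \sum_{i=n-\jmath-s}^{n+\jmath+\ell} b_{i,n}\, Q_i(x), \]
with $a_{n+\jmath+\ell,n}\,b_{n+\jmath+\ell,n}\ne 0$; these top-index coefficients emerge as nonzero products of the leading coefficients of $\sigma$, $\varphi$, $\rho$ and of the nonvanishing constants $g_{n+r,n+1}$ and $\lambda_{\cdot,\cdot}$.

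The main obstacle is the careful index bookkeeping, in particular tracking the shifts $x\mapsto x\pm\omega$ (respectively $x\mapsto q^{\pm 1}x$) introduced by the product rule \eqref{dw-pu}/\eqref{q-dw-pu}, and verifying that the elimination of the $P_{n+1}(x)$ term lands the resulting identity \emph{precisely} in the prescribed support windows $[n-r-\ell,\,n+\jmath+\ell]$ on the left and $[n-\jmath-s,\,n+\jmath+\ell]$ on the right, rather than in a larger one.
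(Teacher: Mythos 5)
Your two building blocks are sound --- the structure relation $\sigma(x)Q_n^{[1,\nu]}(x)=\sum_{l=n-s}^{n+\ell}\lambda_{l,n}Q_l(x)$ from Proposition \ref{prop2.3}, and the connection formula $\rho(x)P_n(x)=\sum_{j=n-\jmath}^{n+r}g_{j,n}Q_j(x)$ with $g_{n+r,n}\neq0$ --- but the bridging step has a genuine gap. Differentiating $\rho(x)P_{n+1}(x)$ leaves on the left the two terms $\eta_{n,1,\nu}\,\rho(\nu x)\,P_n^{[1,\nu]}(x)$ and $(D_\nu\rho)(x)\,P_{n+1}(x)$, so after multiplying by $\sigma(x)$ you must expand a polynomial of degree $\ell+r$ times $P_n^{[1,\nu]}(x)$ in the basis $\{P_i^{[1,\nu]}(x)\}$ with lower index bounded below by $n-r-\ell$. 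That bound does not follow from $\U$-orthogonality (the $P_i^{[1,\nu]}$ are not orthogonal for $\U$); it would require a quasi-orthogonality property of $\{P_i^{[1,\nu]}\}$ governed by the Pearson data of $\U$ --- which is not the given pair $(\sigma,\tau)$, those belonging to $\V$ --- and you have not established it. Your subsequent elimination of the residual $P_{n+1}(x)$ by a further multiplication by $\rho(x)$ compounds the problem: it pushes the top index of both sides to $n+2r+\ell$ and changes the widths of the two support windows (for instance the $Q$-window acquires width $s+\jmath+3r+\ell$ rather than $2\jmath+\ell+s$); since reindexing $n$ translates a window but preserves its width, no index shift can recover the asserted windows $[n-r-\ell,\,n+\jmath+\ell]$ and $[n-\jmath-s,\,n+\jmath+\ell]$, whose widths are exactly the coherence orders $\jmath+2\ell+r$ and $2\jmath+\ell+s$ to be proved.

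The paper closes both gaps by differentiating a different quantity: $\varphi(x)\,\sigma(\nu^{*}x)\,Q_{n+1}(x)$ (i.e.\ $\sigma(x-\omega)$ or $\sigma(q^{-1}x)$). This product is expanded in the $P$-basis \emph{before} applying $D_\nu$ --- first $\varphi(x)Q_{n+1}(x)=\sum_{i=n+1-r}^{n+1+\jmath}\xi_{i,n+1,2}P_i(x)$ (lower bound from $\langle\varphi\U,Q_{n+1}P_i\rangle=\langle\rho\V,Q_{n+1}P_i\rangle$), then $\sigma(\nu^{*}x)P_i(x)=\sum_{j=i-\ell}^{i+\ell}\xi_{j,i,3}P_j(x)$ by plain $\U$-orthogonality --- so that the derivative falls directly on the $P_j$'s and yields $\sum_{i=n-r-\ell}^{n+\jmath+\ell}a_{i,n}P_i^{[1,\nu]}(x)$ with no residual term and no expansion in the $\{P_i^{[1,\nu]}\}$ basis ever needed. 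The Leibniz rule then rewrites the same derivative as $D_\nu[\varphi(x)\sigma(\nu^{*}x)]\,Q_{n+1}(x)+\varphi(\nu x)\sigma(x)\,D_\nu Q_{n+1}(x)$, and both summands expand in the $Q$-basis inside $[n-\jmath-s,\,n+\jmath+\ell]$ (the second via the structure relation followed by the expansion of $\varphi(\nu x)Q_i(x)$, using $s\geq\ell-2$ for the lower bound). If you replace $\rho(x)P_{n+1}(x)$ by this product, your argument goes through; as written, it does not close.
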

\begin{proof}
Let us prove the $q$-case. The proof for the case of $D_\omega$ is similar.

 From Eq. \eqref{eq-prop2.3} of Proposition \ref{prop2.3}, it follows that
 \begin{equation} \label{eq sigmaQ^[1,nu] = sum xi_(i,n,1) Q_i, Dw y Dq}
  \sigma(x)Q^{[1,q]}_{n}(x) = \sum_{i=n-s}^{n+\ell} \xi_{i,n,1}Q_i(x), \,\,\, n\geq s, \quad \xi_{n-s,n,1}\neq0, \,\, n\geq s+1.
 \end{equation}
Using $\varphi(x)\U = \rho(x)\V$, for $n\geq0$, $\varphi(x) Q_n(x) = \sum_{i=0}^{n+\jmath} \xi_{i,n,2} P_i(x)$, where
$$
\langle\U,P^2_i(x)\rangle \xi_{i,n,2} = \langle \U, \varphi(x) Q_n(x)P_i(x)\rangle = \langle \rho(x)\V, Q_n(x)P_i(x)\rangle = 0
$$
 for $i+r\leq n-1$, one finds
 \begin{equation} \label{eq varphiQ = sum xi_(i,n,2) P_i, Dw y Dq}
  \varphi(x) Q_n(x) = \sum_{i=n-r}^{n+\jmath} \xi_{i,n,2} P_i(x), \quad n\geq r.
 \end{equation}
 Furthermore,
 \begin{gather}
  \sigma(q^{-1}x)P_n(x) = \sum_{i=n-\ell}^{n+\ell} \xi_{i,n,3} P_i(x), \quad n\geq\ell, \label{eq sigma(xnu*)P = sum xi_(i,n,3) P, Dw y Dq} \\
  D_q\left[\varphi(x)\sigma(q^{-1}x)\right] Q_{n+1}(x) = \sum_{i=n-\jmath-\ell+2}^{n+\jmath+\ell} \xi_{i,n,4} Q_i(x), \quad n\geq\jmath+\ell-2,
  \label{eq Dnu[varphisigma(xnu*)] Q = sum xi_(i,n,4) Q, Dw y Dq} \\
  \varphi(qx)Q_n(x) = \sum_{i=n-\jmath}^{n+\jmath} \xi_{i,n,5} Q_i(x), \quad n\geq\jmath\,, \label{eq varphi(xnu)Q = sum xi_(i,n,5) Q, Dw y Dq}
 \end{gather}
 where $\langle\U,P^2_i(x)\rangle \xi_{i,n,3}=\bigl\langle\U,\sigma(q^{-1}x)P_n(x)P_i(x)\bigr\rangle$,
 $\langle\V,Q^2_i(x)\rangle\xi_{i,n,4}=\bigl\langle\V,$\linebreak$D_q\left[\varphi(x)\sigma(q^{-1}x)\right] Q_{n+1}(x)Q_i(x)\bigr\rangle$ and
 $\langle\V,Q^2_i(x)\rangle\xi_{i,n,5}=\bigl\langle\V,\varphi(qx)Q_n(x)Q_i(x)\bigr\rangle$. On the other hand,
 \begin{multline} \label{eq Dnu[varphisigma(xnu*)Q] = ...., Dw y Dq}
  D_q\left[\varphi(x)\sigma(q^{-1}x)Q_{n+1}(x)\right] 
  = D_q\left[\varphi(x)\sigma(q^{-1}x)\right]Q_{n+1}(x) + \varphi(qx)\sigma(x) D_q\left[Q_{n+1}(x)\right].
 \end{multline}
 Let us compute each term in the previous $q$-derivative
 \begin{multline*}
  D_q\left[\varphi(x)\sigma(q^{-1}x)Q_{n+1}(x)\right]  \underset{\eqref{eq sigma(xnu*)P = sum xi_(i,n,3) P, Dw y Dq}}{\overset{\eqref{eq varphiQ = sum xi_(i,n,2) P_i, Dw y Dq}}{=}} \sum_{i=n+1-r}^{n+1+\jmath} \xi_{i,n+1,2} \sum_{j=i-\ell}^{i+\ell} \xi_{j,i,3} D_q\left[P_j(x)\right] \\
  = \sum_{i=n-r-\ell+1}^{n+\jmath+\ell+1} \xi_{i,n,6} \frac{D_q\left[P_i(x)\right]}{\eta_{i-1,1,q}} = \sum_{i=n-r-\ell}^{n+\jmath+\ell} \xi_{i+1,n,6} P_i^{[1,q]}(x),
 \end{multline*}
 \begin{align*}
  \varphi(qx)\sigma(x) D_q\left[Q_{n+1}(x)\right]  & \underset{\eqref{eq varphi(xnu)Q = sum xi_(i,n,5) Q, Dw y Dq}}{\overset{\eqref{eq sigmaQ^[1,nu] = sum xi_(i,n,1) Q_i, Dw y Dq}}{=}} \eta_{n,1,q} \sum_{i=n-s}^{n+\ell} \xi_{i,n,1} \sum_{j=i-\jmath}^{i+\jmath} \xi_{j,i,5} Q_j(x) 
  = \sum_{i=n-s-\jmath}^{n+\ell+\jmath} \xi_{i,n,7} Q_i(x).
 \end{align*}
 Consequently, from \eqref{eq Dnu[varphisigma(xnu*)] Q = sum xi_(i,n,4) Q, Dw y Dq} and taking into account that $s\geq\ell-2$, \eqref{eq Dnu[varphisigma(xnu*)Q] = ...., Dw y Dq} becomes \eqref{eq converse rational-semi impli coherence, Dw y Dq}.
\end{proof}

Before concluding this section we would like to remark that an interesting question concerning the study presented in
this work is finding non-trivial examples illustrating the developed theory. This appears to be an hard task from a technical
point of view, and some examples are now under construction (which we hope to be the subject of further work)
following ideas presented in previous works on coherent pairs of OPs, not only motivated by the continuous case, but also by
the $q-$case. For instance, an important source of motivation is Section 6 contained in the paper
\cite{Area_Godoy_Marcellan_2002} by I. Area, E. Godoy, and F. Marcell\'an, where these authors present very
interesting examples, giving the classification of all $q-$coherent pairs of positive-definite linear
functionals when one of them is either the little $q-$Jacobi linear functional or the little $q-$Laguerre linear functional.
With this respect see also the more recent work \cite{Marcellan_PinzonCortes_2012_Dq}.

\section{Application to $D_{\nu}$-Sobolev Orthogonal Polynomials}

In the following $\mathbb{P}$ will denote the linear space of polynomials with real coefficients
and $\U$ and $\V$ will be two positive definite linear functionals.
We will consider the Sobolev-type inner product, for fixed $m\geq1$,
\begin{equation} \label{eq Sobolev inner prod_lambda,nu, Dw y Dq}
 \left\langle p(x),r(x) \right\rangle_{\lambda,\nu} = \left\langle \U, p(x)r(x) \right\rangle  + \lambda \left\langle \V, (D^m_\nu p)(x)(D^m_\nu r)(x) \right\rangle, \quad \lambda>0,
\end{equation}
where $\U$ and $\V$ are regular linear functionals
(which includes the special cases of discrete measures supported on either a uniform
lattice, when $\nu=\omega$, or a $q$-lattice, when $\nu=q$).
Let $\{P_n(x)\}_{n\geq0}$, $\{Q_n(x)\}_{n\geq0}$ and $\{S_n(x;\lambda,\nu)\}_{n\geq0}$ be the
SMOP with respect to $\U$, $\V$ and $\langle\cdot\,,\cdot\rangle_{\lambda,\nu}$, respectively.

\begin{remark}
Notice that we have assumed that both $\U$ and $\V$ are positive definite regular linear functionals.
Otherwise, we could not guarantee a priori that the bilinear form
$\left\langle \cdot,\cdot \right\rangle_{\lambda,\nu}$  defined by
\eqref{eq Sobolev inner prod_lambda,nu, Dw y Dq} is an inner-product.
This is an interesting open problem for a further
investigation but it is beyond the study presented here. Since we are interested in
showing that the notion of coherence is crucial in finding the polynomials
$\{S_n(x;\lambda,\nu)\}_{n\geq0}$ the assumption that both  $\U$ and $\V$ are positive definite
functionals is a sufficient condition for the  Sobolev-type inner product \eqref{eq Sobolev inner prod_lambda,nu, Dw y Dq} to be well defined.
\end{remark}

\begin{proposition}
 The following algebraic relations hold
 \begin{equation} \label{eq Q_n=sum_0^n P^[m,nu]_j, Dw y Dq}
   Q_{n}(x) = P^{[m,\nu]}_{n}(x) + \sum_{j=0}^{n-1} \frac{\eta_{j,m,\nu}}{\eta_{n,m,\nu}} \frac{\langle \U, T_{n+m}(x;\nu)P_{j+m}(x)\rangle}{\langle \U, P^2_{j+m}(x)\rangle} P^{[m,\nu]}_{j}(x), \quad n\geq 0,
 \end{equation}
 \begin{multline} \label{eq S_n + sum_m^n-1 S_i = P_n + sum_m^n-1 P_i, Dw y  Dq}
   S_{n}(x;\lambda,\nu) + \sum_{i=m}^{n-1} \frac{\langle \U, T_{n}(x;\nu)S_{i}(x;\lambda,\nu)\rangle S_{i}(x;\lambda,\nu)}{\langle S_{i}(x;\lambda,\nu), S_{i}(x;\lambda,\nu) \rangle_{\lambda,\nu}} \\
   = P_n(x) + \sum_{i=m}^{n-1} \frac{\langle \U, T_{n}(x;\nu)P_{i}(x)\rangle P_{i}(x)}{\langle\U, P_{i}^2(x) \rangle}, \quad n\geq m,
 \end{multline}
 and $S_n(x;\lambda,\nu)=P_n(x)$ for $n\leq m$, where
 \begin{equation} \label{eq lim S_n = T_n, Dw y Dq}
  T_n(x;\nu) = \lim_{\lambda\longrightarrow\infty} S_n(x;\lambda,\nu) \, , \quad n\geq 0.
 \end{equation}
\end{proposition}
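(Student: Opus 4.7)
The three claims are tightly linked through the auxiliary polynomial $T_n(x;\nu)$, so my plan is to first pin down $T_n$ intrinsically, and then read off (1) and (2) as Fourier-type expansions of $T_n$ in the two natural bases.

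\textbf{Step 1: The case $n\le m$ and the characterization of $T_n$.} If $n<m$, every polynomial $p$ of degree $<n$ satisfies $D_\nu^m p=0$, and $D_\nu^m S_n=0$ as well, so $\langle\cdot,\cdot\rangle_{\lambda,\nu}$ reduces to $\langle\U,\cdot\,\cdot\rangle$ on $\mathbb{P}_n$; hence $S_n=P_n$. For $n=m$, $D_\nu^m p=0$ for $p\in\mathbb{P}_{m-1}$ and the same argument gives $S_m=P_m$. For $n\ge m$, I would write $S_n(\cdot;\lambda,\nu)=\sum_{i=0}^n s_{i,n}(\lambda)x^i$ with $s_{n,n}\equiv1$, and split the orthogonality system $\langle S_n,x^j\rangle_{\lambda,\nu}=0$ into the $m$ conditions $\langle\U,S_n x^j\rangle=0$ (for $0\le j<m$), which are $\lambda$-independent, and, after dividing by $\lambda$, the $n-m$ conditions $\lambda^{-1}\langle\U,S_n x^j\rangle+\langle\V,(D_\nu^m S_n)(D_\nu^m x^j)\rangle=0$ (for $m\le j\le n-1$). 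Passing to the limit $\lambda\to\infty$ (and noting that the resulting linear system on $(s_{0,n},\dots,s_{n-1,n})$ is nonsingular because $\U$ and $\V$ are regular) shows that $T_n:=\lim_{\lambda\to\infty}S_n$ exists and is the unique monic polynomial of degree $n$ satisfying
\[
\langle\U,T_n(x)x^j\rangle=0,\ 0\le j\le m-1,\qquad D_\nu^m T_n(x;\nu)=\eta_{n-m,m,\nu}\,Q_{n-m}(x).
\]

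\textbf{Step 2: Proof of \eqref{eq Q_n=sum_0^n P^[m,nu]_j, Dw y Dq}.} Expand $T_{n+m}$ in the $\U$-orthogonal basis $\{P_i\}_{i=0}^{n+m}$: $T_{n+m}=\sum_{i=0}^{n+m}\frac{\langle\U,T_{n+m}P_i\rangle}{\langle\U,P_i^2\rangle}P_i$. The first characterization of $T_{n+m}$ kills the coefficients with $i<m$, so only indices $i=m,\dots,n+m$ remain. Applying $D_\nu^m$, using $D_\nu^m P_i=\eta_{i-m,m,\nu}P^{[m,\nu]}_{i-m}$ and $D_\nu^m T_{n+m}=\eta_{n,m,\nu}Q_n$, and reindexing $j=i-m$ yields exactly \eqref{eq Q_n=sum_0^n P^[m,nu]_j, Dw y Dq}; the top coefficient ($j=n$) comes out to $1$ because both $T_{n+m}$ and $P_{n+m}$ are monic of degree $n+m$.

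\textbf{Step 3: Proof of \eqref{eq S_n + sum_m^n-1 S_i = P_n + sum_m^n-1 P_i, Dw y  Dq}.} The idea is to expand $T_n$ twice: once in $\{P_i\}_{i=0}^n$ (orthogonal for $\U$) and once in $\{S_i(\cdot;\lambda,\nu)\}_{i=0}^n$ (orthogonal for $\langle\cdot,\cdot\rangle_{\lambda,\nu}$), and equate. Both expansions have leading coefficient $1$ since $T_n$ is monic. For the $P_i$-expansion, the $\U$-orthogonality of $T_n$ to $\mathbb{P}_{m-1}$ kills the coefficients with $i<m$, and the right-hand side of \eqref{eq S_n + sum_m^n-1 S_i = P_n + sum_m^n-1 P_i, Dw y  Dq} is precisely the resulting identity. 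For the Sobolev expansion, for $i<m$ we have $S_i=P_i$ and $D_\nu^m S_i=0$, so $\langle T_n,S_i\rangle_{\lambda,\nu}=\langle\U,T_n P_i\rangle=0$; for $m\le i\le n-1$, $D_\nu^m S_i$ has degree $i-m<n-m$, and since $D_\nu^m T_n$ is proportional to $Q_{n-m}$, the $\V$-part of $\langle T_n,S_i\rangle_{\lambda,\nu}$ vanishes, leaving $\langle T_n,S_i\rangle_{\lambda,\nu}=\langle\U,T_n S_i\rangle$. This produces the left-hand side, closing the identity.

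\textbf{Main obstacle.} The only nontrivial part is Step 1: rigorously justifying the limit $T_n=\lim_{\lambda\to\infty}S_n$ and showing it is well-defined. Everything afterwards is a clean change-of-basis argument exploiting (a) the $\U$-orthogonality of $T_n$ to $\mathbb{P}_{m-1}$ and (b) the identity $D_\nu^m T_n=\eta_{n-m,m,\nu}Q_{n-m}$, which are precisely the two defining properties extracted from the limit analysis.
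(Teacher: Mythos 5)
Your proof is correct and follows essentially the same route as the paper: establish $S_n=P_n$ for $n\le m$, justify the existence of $T_n=\lim_{\lambda\to\infty}S_n$, extract its two orthogonality properties ($T_n\perp_{\U}\mathbb{P}_{m-1}$ and $D_\nu^m T_{n+m}=\eta_{n,m,\nu}Q_n$), and then read off both identities by expanding $T_n$ in the $\{P_i\}$ and $\{S_i\}$ bases. The only (harmless) difference is in Step 1, where you justify the limit by splitting and rescaling the normal equations and invoking nonsingularity of the limiting system, whereas the paper uses the determinantal formula for $S_n$ in terms of the moments $w_{i,j,\nu}$ to see that each coefficient is a rational function of $\lambda$ with numerator degree not exceeding denominator degree.
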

\begin{proof}
 From \eqref{eq Sobolev inner prod_lambda,nu, Dw y Dq}, $\langle P_n(x), x^i\rangle_{\lambda,\nu} = 0$, for $i<n<m$, and thus $S_n(x;\lambda,\nu)=P_n(x)$ for $n<m$. Also, from the uniqueness of the SMOP with respect to the bilinear functional $\W$ associated with $\langle\cdot,\cdot\rangle_{\lambda,\nu}$, each $S_n(x;\lambda,\nu)$ can be written as
 \begin{equation*}
  S_n(x;\lambda,\nu) = \frac{
  \left|
   \begin{array}{cccc}
     w_{0,0,\nu} & \cdots & w_{0,n-1,\nu} & w_{0,n,\nu} \\
     \vdots & \ddots & \vdots & \vdots \\
     w_{n-1,0,\nu} & \cdots & w_{n-1,n-1,\nu} & w_{n-1,n,\nu} \\
     1 & \cdots & x^{n-1} & x^n \\
   \end{array}
  \right|}{\det\left([w_{i,j,\nu}]_{i,j=0}^{n-1}\right)}, \,n\geq 1, \,\, S_0(x;\lambda,\nu)=1,
 \end{equation*}
 where $w_{i,j,\nu}=\langle x^i,x^j\rangle_{\lambda,\nu} = u_{i+j} + \lambda \eta_{i-m,m,\nu}\eta_{j-m,m,\nu} v_{(i-m)+(j-m)}$, for $i,j\geq 0$. Hence, every coefficient of $S_n(x;\lambda,\nu)$ is a rational
 function of $\lambda$ such that their numerator and denominator have the same degree, and as a
 consequence, there exist the monic polynomials $T_n$ given by \eqref{eq lim S_n = T_n, Dw y Dq}.
 On the other hand, from \eqref{eq lim S_n = T_n, Dw y Dq} and
 \eqref{eq Sobolev inner prod_lambda,nu, Dw y Dq} we obtain, for $n\geq 0$,
 \begin{equation} \label{help45}
  \langle \U, T_n(x;\nu)x^i\rangle = 0, \,\, i<\min\{n,m\}, \quad \langle \V, D_\nu^m[T_n(x;\nu)]\,x^j\rangle = 0, \,\, j<n-m.
 \end{equation}
 Indeed, for $i<\min\{n,m\}$, $\langle S_n(x;\lambda,\nu), x^i\rangle_{\lambda,\nu} =0$ and
 $D_\nu^m(x^i)$, hence
 \begin{gather*}
  \langle \U, T_n(x;\nu)x^i\rangle = \lim_{\lambda\rightarrow\infty} \left[\langle S_n(x;\lambda,\nu), x^i\rangle_{\lambda,\nu} - \lambda \langle \V, D_\nu^m(S_n(x;\lambda,\nu)) D_\nu^m(x^i) \rangle\right] = 0.
\end{gather*}
For $j<n-m$, we write $x^j= D_\nu \pi_{j+m}(x;\nu)$ for a certain polynomial
$\pi_{j+m}$ of degree $m+j$. Therefore, using \eqref{eq lim S_n = T_n, Dw y Dq}
and taking $p(x)=T_n(x;\nu)$ and $r(x)=\pi_{j+m}(x;\nu)$ in
\eqref{eq Sobolev inner prod_lambda,nu, Dw y Dq} we get
$$
\left\langle \V, (D^m_\nu T_n)(x;\nu) x^j \right\rangle=\lim_{\lambda\to\infty}
\left\langle \V, (D^m_\nu S_n)(x;\lambda,\nu)(D^m_\nu \pi_{j+m})(x;\nu) \right\rangle
$$
$$
=\lim_{\lambda\to\infty}\frac{1}{\lambda}
\left[  \left\langle S_n(x;\lambda,\nu),  \pi_{j+m}(x;\nu) \right\rangle_{\lambda,\nu} -
\left\langle \U, S_n(x;\lambda,\nu) \pi_{j+m}(x;\nu) \right\rangle  \right].
$$
The first term is zero when $j<n-m$ and for the second one we have
$$
\lim_{\lambda\to\infty}\frac{1}{\lambda} \left\langle \U, S_n(x;\lambda,\nu) \pi_{j+m}(x;\nu) \right\rangle
=0,
$$
since, by \eqref{eq lim S_n = T_n, Dw y Dq}, the limit $\lim_{\lambda\to\infty} \left\langle \U, S_n(x;\lambda,\nu) \pi_{j+m}(x;\nu) \right\rangle$ exists. This proves \eqref{help45}.

From \eqref{help45}, it follows that, for $n\geq m$ and $n\geq 0$, respectively,
 \begin{gather}
   T_{n}(x;\nu) = \sum_{i=0}^{n} \frac{\langle \U, T_{n}(x;\nu)P_{i}(x)\rangle}{\langle \U, P_{i}^2(x) \rangle} P_{i}(x) = \sum_{j=0}^{n-m} \frac{\langle \U, T_{n}(x;\nu)P_{j+m}(x)\rangle}{\langle\U,P_{j+m}^2(x)\rangle} P_{j+m}(x), \notag \\
   \frac{D_\nu^m[T_{n+m}(x;\nu)]}{\eta_{n,m,\nu}} = \sum_{i=0}^{n} \frac{\langle \V, Q_{i}(x) D_\nu^m[T_{n+m}(x;\nu)]/\eta_{n,m,\nu} \rangle}{\langle\V,Q_{i}^2(x)\rangle} Q_{i}(x) = Q_{n}(x), \label{eq Q_n = Dnu^mT_n+m / eta_n,m,nu, Dw y Dq}
 \end{gather}
 which proves \eqref{eq Q_n=sum_0^n P^[m,nu]_j, Dw y Dq}. Finally, for the proof of \eqref{eq S_n + sum_m^n-1 S_i = P_n + sum_m^n-1 P_i, Dw y  Dq}, using \eqref{eq Sobolev inner prod_lambda,nu, Dw y Dq} and \eqref{help45} we get
 \begin{align*}
  T_{n}(x;\nu) &= \sum_{i=0}^{n} \frac{\langle T_{n}(x;\nu),S_{i}(x;\lambda,\nu)\rangle_{\lambda,\nu}}{\langle S_{i}(x;\lambda,\nu), S_{i}(x;\lambda,\nu) \rangle_{\lambda,\nu}} S_{i}(x;\lambda,\nu) \\
  & = S_{n}(x;\lambda,\nu) + \sum_{i=m}^{n-1} \frac{\langle\U, T_{n}(x;\nu)S_{i}(x;\lambda,\nu)\rangle}{\langle S_{i}(x;\lambda,\nu), S_{i}(x;\lambda,\nu) \rangle_{\lambda,\nu}} S_{i}(x;\lambda,\nu), \quad n\geq 0.
 \end{align*}
\end{proof}

Now, we will study the case when $\U$ and $\V$ form a $(M,N)$-$D_{\nu}$-coherent pair of order $m$, i.e,
when their corresponding SMOP $\{P_n(x)\}_{n\geq0}$ and $\{Q_n(x)\}_{n\geq0}$ satisfy
\begin{equation} \label{eq (M,N)-Dnu-coherence order (m,0) with [], Dw y Dq}
 P^{[m,\nu]}_{n}(x) + \sum_{i=1}^M a_{i,n} P^{[m,\nu]}_{n-i}(x) = Q_{n}(x) + \sum_{i=1}^N b_{i,n} Q_{n-i}(x), \quad n\geq 0,
\end{equation}
where $a_{M,n}\neq0$ if $n\geq M$, $b_{N,n}\neq0$ if $n\geq N$, and $a_{i,n}=b_{i,n}=0$ when $i>n$.

One of the most important problems in the theory of Sobolev OPS is to find the explicit expressions
for the polynomials themselves. When the Sobolev OPS is orthogonal with respect to the inner product
\eqref{eq Sobolev inner prod_lambda,nu, Dw y Dq} and $\U$ and $\V$ constitute a $(M,N)$-$D_\nu$-coherent
pair of order $m$, it is possible to obtain the Sobolev orthogonal polynomials by using the following two
theorems that generalize an algebraic property proved for $(M,0)$-$D_\omega$-coherent and $(1,1)$-$D_\nu$-coherent pairs of order $1$, in \cite{Kwon_Lee_Marcellan_2004, Marcellan_PinzonCortes_2012_Dw, Marcellan_PinzonCortes_2012_Dq}, to $(M,N)$-$D_\nu$-coherent pairs of order $m$, and they are the $D_\nu$-analogue results obtained in \cite{Jesus_Marcellan_Petronilho_PinzonCortes201X, Jesus_Petronilho_2013}.

\begin{theorem} \label{teo (M,N)-Dnu-coherence order (m,0) impl relacion con Sobolev, Dw y Dq}
 Let $(\U,\V)$ be a $(M,N)$-$D_\nu$-coherent pair of order $m$ given by \eqref{eq (M,N)-Dnu-coherence order (m,0) with [], Dw y Dq}, and $K=\max\{M,N\}$. Then, $S_{n}(x;\lambda,\nu)=P_n(x)$ for $n< m$ and
 \begin{equation} \label{eq P_n+m + sum^M a P_n-i+m = S_n+m + sum^K c S_n-i+m, Dw y Dq}
  P_{n+m}(x) + \sum_{i=1}^M \frac{\eta_{n,m,\nu} a_{i,n}}{\eta_{n-i,m,\nu}} P_{n-i+m}(x) = S_{n+m}(x;\lambda,\nu) + \sum_{j=1}^{K} c_{j,n,\lambda,\nu} S_{n-j+m}(x;\lambda,\nu),
 \end{equation}
 for $n\geq 0$, where $c_{j,n,\lambda,\nu}= 0$ for $n<j\leq K$, and, for $1\leq j\leq K$,
 \begin{multline} \label{eq c_j,n,lambda,nu <S_n-j+m,S_n-j+m>_lambda,nu = sum^M + sum^N, Dw y Dq}
  c_{j,n,\lambda,\nu} = \frac{\eta_{n,m,\nu}}{\langle S_{n-j+m}(x;\lambda,\nu), S_{n-j+m}(x;\lambda,\nu) \rangle_{\lambda,\nu}} \Biggl[\sum_{i=j}^M \frac{a_{i,n}}{\eta_{n-i,m,\nu}} \\
  \bigl\langle \U, P_{n-i+m}(x)S_{n-j+m}(x;\lambda,\nu) \bigr\rangle + \lambda \sum_{i=j}^N b_{i,n} \bigl\langle \V, Q_{n-i}(x)D_\nu^m[S_{n-j+m}(x;\lambda,\nu)] \bigr\rangle \Biggr].
 \end{multline}
 Besides, for each $n\geq K$,
 \begin{enumerate}
  \item[\textbf{(i)}] if $M>N$ and $a_{M,n}\neq0$, then $c_{K,n,\lambda,\nu}\neq0$,
  \item[\textbf{(ii)}] if $M<N$ and $b_{N,n}\neq0$, then $c_{K,n,\lambda,\nu}\neq0$,
  \item[\textbf{(iii)}] if $M=N(=K)$ and $a_{M,n}b_{N,n}\neq0$ then,
     \begin{equation*}
      c_{K,n,\lambda,\nu}\neq0 \,\text{ iff }\, a_{K,n}\langle\U,P_{n-K+m}^2(x)\rangle + \lambda \eta_{n-K,m,\nu}^2 b_{K,n} \langle\V,Q_{n-K}^2(x)\rangle \neq 0.
     \end{equation*}
 \end{enumerate}

 Conversely, if there exist constants $\{c_{j,n,\lambda,\nu}\}_{n\geq0}$, $1\leq j\leq K$, and $\{a_{i,n}\}_{n\geq0}$, $1\leq i\leq M$, with $c_{j,n,\lambda,\nu}= 0$, $n-j+m<0$, and $a_{i,n}= 0$, $n-i+m<0$, such that \eqref{eq P_n+m + sum^M a P_n-i+m = S_n+m + sum^K c S_n-i+m, Dw y Dq} holds, then $(\U,\V)$ is a $(M,K)$-$D_\nu$-coherent pair of order $m$ given by
 \begin{equation} \label{eq (M,K)-Dnu-coherence (m,0) when sum Pn = sum Sn, Dw y dq}
  P^{[m,\nu]}_{n}(x) + \sum_{i=1}^M a_{i,n} P^{[m,\nu]}_{n-i}(x) = Q_n(x) + \sum_{j=1}^K b_{j,n} Q_{n-j}(x), \quad n\geq0,
 \end{equation}
 (whenever $b_{K,n}\neq0$ for $n\geq K$), where $b_{j,n}=0$ for $n<j\leq K$, and for $n\geq0$,
 \begin{equation} \label{eq b_j,n when sum Pn = sum Sn imply (M,K)-Dnu-coherence (m,0), Dw y Dq}
  b_{j,n} = \frac{\left\langle \V, \left(P^{[m,\nu]}_{n}(x) + \sum_{i=1}^M a_{i,n} P^{[m,\nu]}_{n-i}(x)\right)Q_{n-j}(x) \right\rangle}{\langle\V,Q_{n-j}^2(x)\rangle},\quad 1\leq j\leq\min\{K,n\}.
 \end{equation}
\end{theorem}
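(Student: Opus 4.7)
The plan is as follows. For the direct statement, set $F_n(x) := P_{n+m}(x) + \sum_{i=1}^{M} \frac{\eta_{n,m,\nu}\,a_{i,n}}{\eta_{n-i,m,\nu}}\, P_{n-i+m}(x)$, a monic polynomial of degree $n+m$, and expand it as $F_n(x) = \sum_{i=0}^{n+m}\gamma_{i,n,\lambda,\nu}\,S_i(x;\lambda,\nu)$ with Fourier coefficients $\gamma_{i,n,\lambda,\nu} = \langle F_n, S_i\rangle_{\lambda,\nu}/\langle S_i,S_i\rangle_{\lambda,\nu}$. Splitting each inner product via \eqref{eq Sobolev inner prod_lambda,nu, Dw y Dq}, the key observation is that, thanks to the coherence hypothesis \eqref{eq (M,N)-Dnu-coherence order (m,0) with [], Dw y Dq},
\[
D_\nu^m F_n(x) = \eta_{n,m,\nu}\Bigl[P^{[m,\nu]}_n(x) + \sum_{i=1}^{M} a_{i,n}\, P^{[m,\nu]}_{n-i}(x)\Bigr] = \eta_{n,m,\nu}\Bigl[Q_n(x) + \sum_{i=1}^{N} b_{i,n}\, Q_{n-i}(x)\Bigr].
\]
Thus $\langle \U, F_n S_i\rangle$ is a combination of terms $\langle \U, P_{n-i'+m} S_i\rangle$ with $0\le i'\le M$, which vanishes for $i < n-M+m$ by $\U$-orthogonality; and $\lambda\langle\V, D_\nu^m F_n\cdot D_\nu^m S_i\rangle$ is a combination of $\langle\V, Q_{n-i'}\,D_\nu^m S_i\rangle$ with $0\le i'\le N$, which vanishes for $i-m < n-N$ (and for $i<m$, where $D_\nu^m S_i \equiv 0$). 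Jointly, $\gamma_{i,n,\lambda,\nu} = 0$ for every $i < n-K+m$, yielding \eqref{eq P_n+m + sum^M a P_n-i+m = S_n+m + sum^K c S_n-i+m, Dw y Dq} with $c_{j,n,\lambda,\nu} := \gamma_{n-j+m,n,\lambda,\nu}$.

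Substituting $i = n-j+m$ into the preceding computation, only the tails $i' \ge j$ survive in each of the $a$- and $b$-sums (for $i' < j$ the corresponding inner products still vanish by orthogonality), giving the explicit formula \eqref{eq c_j,n,lambda,nu <S_n-j+m,S_n-j+m>_lambda,nu = sum^M + sum^N, Dw y Dq}. For $j > n$ the same argument forces $c_{j,n,\lambda,\nu} = 0$ thanks to the convention $a_{i,n} = b_{i,n} = 0$ for $i > n$. The subcases (i)--(iii) then follow by specialising the formula at $j = K$: when $M > N$ (resp.\ $N > M$) only the $i' = K$ term of the $a$-sum (resp.\ $b$-sum) survives, since $K$ exceeds the length of the other sum, producing a non-zero multiple of $a_{M,n}\,\langle\U,P_{n-M+m}^2\rangle$ (resp.\ $b_{N,n}\,\eta_{n-N,m,\nu}\,\langle\V,Q_{n-N}^2\rangle$); when $M = N = K$ both sums contribute exactly one term each and the stated criterion is precisely the non-vanishing of their sum.

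For the converse, assume \eqref{eq P_n+m + sum^M a P_n-i+m = S_n+m + sum^K c S_n-i+m, Dw y Dq} and set $\tilde{F}_n(x) := P^{[m,\nu]}_n(x) + \sum_{i=1}^M a_{i,n}\, P^{[m,\nu]}_{n-i}(x)$, a monic polynomial of degree $n$ satisfying $\eta_{n,m,\nu}\,\tilde F_n = D_\nu^m F_n$. The goal is to show that $\tilde F_n$ is $\V$-orthogonal to every polynomial of degree less than $n-K$, which then forces $\tilde F_n = Q_n + \sum_{j=1}^K b_{j,n}\,Q_{n-j}$ with the $b_{j,n}$ given by the Fourier-coefficient formula \eqref{eq b_j,n when sum Pn = sum Sn imply (M,K)-Dnu-coherence (m,0), Dw y Dq}. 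Given any polynomial $\pi_j$ with $\deg\pi_j = j < n-K$, the key move is to write $\pi_j = D_\nu^m\Pi_{j+m}$ for some polynomial $\Pi_{j+m}$ of degree $j+m$ (possible because $D_\nu^m:\mathbb{P}\to\mathbb{P}$ is surjective with kernel of dimension $m$), and to use \eqref{eq Sobolev inner prod_lambda,nu, Dw y Dq} to rewrite
\[
\eta_{n,m,\nu}\langle \V, \tilde F_n\,\pi_j\rangle = \langle\V,(D_\nu^m F_n)(D_\nu^m\Pi_{j+m})\rangle = \tfrac{1}{\lambda}\bigl[\langle F_n,\Pi_{j+m}\rangle_{\lambda,\nu} - \langle \U, F_n\,\Pi_{j+m}\rangle\bigr].
\]
By hypothesis, $F_n$ is a combination of $S_l$'s with $l \ge n-K+m > j+m$, so $\langle F_n,\Pi_{j+m}\rangle_{\lambda,\nu} = 0$; and $F_n$ is simultaneously a combination of $P_l$'s with $l \ge n-M+m > j+m$, so $\langle \U, F_n\Pi_{j+m}\rangle = 0$. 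Hence $\langle\V,\tilde F_n\,\pi_j\rangle = 0$, as required.

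The subtlest step will be the converse: the identity that bridges $\V$-orthogonality of $\tilde F_n$ to the simultaneous Sobolev- and $\U$-orthogonalities of $F_n$ relies on surjectivity of $D_\nu^m$ together with the precise algebraic structure of the Sobolev inner product. Once this identity is in place, both the direct and the converse parts reduce to careful bookkeeping with the indices $M$, $N$ and $K$.
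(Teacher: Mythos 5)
Your argument is correct, and the converse half coincides with the paper's (both test \eqref{eq P_n+m + sum^M a P_n-i+m = S_n+m + sum^K c S_n-i+m, Dw y Dq} against $\langle\cdot,p\rangle_{\lambda,\nu}$ for $p$ of degree $<n-K+m$, kill the $\U$-term by degree count, and use surjectivity of $D_\nu^m$ to conclude $\V$-quasi-orthogonality of order $K$ for $P^{[m,\nu]}_{n}+\sum_i a_{i,n}P^{[m,\nu]}_{n-i}$). The forward half, however, takes a genuinely more direct route than the paper. The paper first builds the auxiliary limit polynomials $T_n(x;\nu)=\lim_{\lambda\to\infty}S_n(x;\lambda,\nu)$, substitutes $Q_n=D_\nu^m[T_{n+m}]/\eta_{n,m,\nu}$ into the coherence relation, takes $D_\nu$-antiderivatives $m$ times, and must then prove that the resulting residual polynomial $\sum_{j=0}^{m-1}\kappa_{n,j}x^j$ vanishes (via the nonsingularity of the Hankel matrix $[u_{i+j}]$), before finally expanding the $T$-combination in the Sobolev basis. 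You bypass $T_n$ entirely: you expand $F_n=P_{n+m}+\sum_i\frac{\eta_{n,m,\nu}a_{i,n}}{\eta_{n-i,m,\nu}}P_{n-i+m}$ directly in $\{S_i(x;\lambda,\nu)\}$ and observe that the two pieces of each Sobolev Fourier coefficient vanish for $i<n-M+m$ (by $\U$-orthogonality) and for $i<n-N+m$ (by $\V$-orthogonality, after rewriting $D_\nu^m F_n$ via the coherence hypothesis), which immediately truncates the expansion at $K=\max\{M,N\}$ and yields \eqref{eq c_j,n,lambda,nu <S_n-j+m,S_n-j+m>_lambda,nu = sum^M + sum^N, Dw y Dq} and cases (i)--(iii) by inspecting the $j=K$ term. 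Your version is shorter and avoids both the antiderivative constants and any appeal to the limit polynomials; what it gives up is the intermediate identity \eqref{eq P_n+m + sum^M a P_n-i+m = T_n+m + sum^N b T_n-i+m, Dw y Dq} relating the $P_n$ to the $T_n$, which the paper obtains as a by-product and which is of independent interest. Both arguments are sound and arrive at the same coefficient formulas.
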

\begin{proof}
 $S_n(x;\lambda,\nu)=P_n(x)$, $n<m$, follows from $\langle P_n(x), x^i\rangle_{\lambda,\nu} = 0$, $i<n<m$. On the other hand, substituting \eqref{eq Q_n = Dnu^mT_n+m / eta_n,m,nu, Dw y Dq} in \eqref{eq (M,N)-Dnu-coherence order (m,0) with [], Dw y Dq}, and then, computing $D_\nu$-antiderivatives $m$ times (this is, a function $F(x)$ is a \emph{$D_\nu$-antiderivative} of a function $f(x)$ if $D_\nu F(x) = f(x)$, \cite{Kac_Cheung_2002}), we obtain for $n\geq 0$,
 \begin{equation*}
  \frac{P_{n+m}(x)}{\eta_{n,m,\nu}} + \sum_{i=1}^M \frac{a_{i,n} P_{n-i+m}(x)}{\eta_{n-i,m,\nu}} = \frac{T_{n+m}(x;\nu)}{\eta_{n,m,\nu}} + \sum_{i=1}^N \frac{b_{i,n} T_{n-i+m}(x;\nu)}{\eta_{n-i,m,\nu}} + \sum_{j=0}^{m-1} \kappa_{n,j}x^j.
 \end{equation*}
 Taking $\langle x^i\U,\cdot\,\rangle$, $i<m$, from \eqref{help45}, we get the linear system $\sum_{j=0}^{m-1} \kappa_{n,j}u_{j+i}=0$, $i=0,\ldots,m-1$. Since $\det\left([u_{i+j}]_{i,j=0}^{m-1}\right)\neq 0$, then $\kappa_{n,j}=0$, $j=0,\ldots,m-1$, $n\geq 0$. Hence, for $n\geq 0$,
 \begin{equation} \label{eq P_n+m + sum^M a P_n-i+m = T_n+m + sum^N b T_n-i+m, Dw y Dq}
  \frac{P_{n+m}(x)}{\eta_{n,m,\nu}} + \sum_{i=1}^M a_{i,n} \frac{P_{n-i+m}(x)}{\eta_{n-i,m,\nu}} = \frac{T_{n+m}(x;\nu)}{\eta_{n,m,\nu}} + \sum_{i=1}^N b_{i,n} \frac{T_{n-i+m}(x;\nu)}{\eta_{n-i,m,\nu}}.
 \end{equation}
 Furthermore, for $n\geq 0$,
 \begin{equation*}
  \frac{T_{n+m}(x;\nu)}{\eta_{n,m,\nu}} + \sum_{i=1}^N b_{i,n} \frac{T_{n-i+m}(x;\nu)}{\eta_{n-i,m,\nu}} = \frac{S_{n+m}(x;\lambda,\nu)}{\eta_{n,m,\nu}} + \sum_{j=1}^{n+m} \frac{c_{j,n,\lambda,\nu}}{\eta_{n,m,\nu}} S_{n-j+m}(x;\lambda,\nu),
 \end{equation*}
 where from \eqref{eq Sobolev inner prod_lambda,nu, Dw y Dq}, \eqref{eq P_n+m + sum^M a P_n-i+m = T_n+m + sum^N b T_n-i+m, Dw y Dq} and \eqref{eq Q_n = Dnu^mT_n+m / eta_n,m,nu, Dw y Dq}, for $1\leq j\leq n+m$,
 \begin{multline*}
  \langle S_{n-j+m}(x;\lambda,\nu),S_{n-j+m}(x;\lambda,\nu)\rangle_{\lambda,\nu} \frac{c_{j,n,\lambda,\nu}}{\eta_{n,m,\nu}} = \sum_{i=1}^M \frac{a_{i,n}}{\eta_{n-i,m,\nu}} \\
  \bigl\langle \U, P_{n-i+m}(x)S_{n-j+m}(x;\lambda,\nu) \bigr\rangle + \lambda \sum_{i=1}^N b_{i,n} \bigl\langle \V, Q_{n-i}(x) D_\nu^m[S_{n-j+m}(x;\lambda,\nu)] \bigr\rangle,
 \end{multline*}
 then $c_{j,n,\lambda,\nu}=0$ for $j>i$ or $j>K$. Thus, \eqref{eq P_n+m + sum^M a P_n-i+m = S_n+m + sum^K c S_n-i+m, Dw y Dq} and \eqref{eq c_j,n,lambda,nu <S_n-j+m,S_n-j+m>_lambda,nu = sum^M + sum^N, Dw y Dq} hold. Also, for $n\geq K$,
 \begin{equation*}
  \frac{c_{K,n,\lambda,\nu}}{\eta_{n,m,\nu}} = \frac{ \frac{a_{M,n}}{\eta_{n-M,m,\nu}} \langle\U,P_{n-M+m}^2(x)\rangle \delta_{M,K} + \lambda \eta_{n-N,m,\nu} b_{N,n} \langle\V,Q_{n-N}^2(x)\rangle \delta_{N,K} }{ \langle S_{n-K+m}(x;\lambda,\nu), S_{n-K+m}(x;\lambda,\nu)\rangle_{\lambda,\nu} },
 \end{equation*}
 holds, and as a consequence, \emph{(i)}, \emph{(ii)} and \emph{(iii)} follow. Finally,
 \begin{equation*}
  P^{[m,\nu]}_{n}(x) + \sum_{i=1}^M a_{i,n} P^{[m,\nu]}_{n-i}(x) = Q_n(x) + \sum_{j=1}^n b_{j,n} Q_{n-j}(x), \quad n\geq0,
 \end{equation*}
 with $b_{j,n}$, for $1\leq j\leq n$, given by \eqref{eq b_j,n when sum Pn = sum Sn imply (M,K)-Dnu-coherence (m,0), Dw y Dq}. Applying $\langle \,\cdot\,, p(x) \rangle_{\lambda,\nu}$ to both sides of \eqref{eq P_n+m + sum^M a P_n-i+m = S_n+m + sum^K c S_n-i+m, Dw y Dq}, for $p\in\mathbb{P}_{n-K+m-1}$, it follows that
 \begin{equation*}
  0 = \lambda \left\langle \V, \left(D_\nu^m[P_{n+m}(x)] + \sum_{i=1}^M \frac{\eta_{n,m,\nu} a_{i,n}}{\eta_{n-i,m,\nu}} D_\nu^m[P_{n-i+m}(x)]\right) D_\nu^m[p(x)] \right\rangle,
 \end{equation*}
 i.e., 
 \begin{equation*}
  0 = \left\langle \V, \left(P^{[m,\nu]}_{n}(x) + \sum_{i=1}^M a_{i,n} P^{[m,\nu]}_{n-i}(x)\right) r(x) \right\rangle, \quad \forall \, r\in\mathbb{P}_{n-K-1},
 \end{equation*}
 thus $b_{j,n}=0$, for $n-j\leq n-(K+1)$, which proves \eqref{eq (M,K)-Dnu-coherence (m,0) when sum Pn = sum Sn, Dw y dq}.
\end{proof}

\begin{theorem} \label{teo (M,N)-Dnu-coherence order (m,0) impl eq recursiva s_n,nu y c_j,n,lambda,nu, Dw y Dq}
 Let $(\U,\V)$ be a $(M,N)$-$D_\nu$-coherent pair of order $m$ given by \eqref{eq (M,N)-Dnu-coherence order (m,0) with [], Dw y Dq}, $K=\max\{M,N\}$, and for $n\geq 0$,
 \begin{equation*}
  s_{n,\nu}=\langle S_n(x;\lambda,\nu), S_n(x;\lambda,\nu)\rangle_{\lambda,\nu}, \quad \widetilde{a}_{i,n} = \frac{\eta_{n,m,\nu}}{\eta_{n-i,m,\nu}}a_{i,n}, \quad \widetilde{b}_{i,n} = \eta_{n,m,\nu}b_{i,n},
 \end{equation*}
 with $a_{i,n}=b_{i,n}=0$ if $i>n$, and, $a_{0,n}=b_{0,n}=1$ for $n\geq 0$. Then
 \begin{equation} \label{eq s*c = zeta_j,n,lambda,nu - sum^K-j c*c*s, Dw y Dq}
  s_{n+m,\nu} c_{j,n+j,\lambda,\nu} = \zeta_{j,n,\lambda,\nu} - \sum_{\ell=1}^{K-j} c_{\ell,n,\lambda,\nu} c_{j+\ell,n+j,\lambda,\nu} s_{n-\ell+m,\nu}, \quad 0\leq j\leq K, \, n\geq 0,
 \end{equation}
 with $s_{n,\nu}=\langle\U,P_n^2(x)\rangle$ for $n<m$, $c_{0,n,\lambda,\nu} = 1$ for $n\geq 0$, $c_{j,n,\lambda,\nu}= 0$ for $n<j\leq K$, and for $0\leq j\leq K$,
 \begin{equation*} 
  \zeta_{j,n,\lambda,\nu} = \sum_{i=j}^M \widetilde{a}_{i,n+j} \widetilde{a}_{i-j,n} \langle\U,P_{n+j-i+m}^2(x)\rangle + \lambda \sum_{i=j}^N \widetilde{b}_{i,n+j} \widetilde{b}_{i-j,n} \langle\V,Q_{n+j-i}^2(x)\rangle.
 \end{equation*}
\end{theorem}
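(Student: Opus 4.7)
The plan is to compute one inner product in two ways. Define the common polynomial on each side of \eqref{eq P_n+m + sum^M a P_n-i+m = S_n+m + sum^K c S_n-i+m, Dw y Dq}, namely
\[
R_n(x) := P_{n+m}(x) + \sum_{i=1}^{M} \widetilde{a}_{i,n}\, P_{n-i+m}(x) = S_{n+m}(x;\lambda,\nu) + \sum_{j=1}^{K} c_{j,n,\lambda,\nu}\, S_{n-j+m}(x;\lambda,\nu),
\]
setting $\widetilde{a}_{0,n}=1$ and $c_{0,n,\lambda,\nu}=1$, and with the convention $\widetilde{a}_{i,n}=c_{j,n,\lambda,\nu}=0$ for $i>n$ or $n<j\le K$. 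The whole proof consists in evaluating $\langle R_{n+j},R_n\rangle_{\lambda,\nu}$ twice, once from each of the two expressions, and equating the results.

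Using the Sobolev side of the expression, orthogonality of $\{S_k(x;\lambda,\nu)\}_{k\ge 0}$ with respect to $\langle\cdot,\cdot\rangle_{\lambda,\nu}$ collapses the double sum. The term $\langle S_{n+j-\ell_1+m},S_{n-\ell_2+m}\rangle_{\lambda,\nu}$ survives only when $\ell_1=\ell_2+j$, so writing $\ell=\ell_2$ yields
\[
\langle R_{n+j},R_n\rangle_{\lambda,\nu} = \sum_{\ell=0}^{K-j} c_{\ell+j,n+j,\lambda,\nu}\, c_{\ell,n,\lambda,\nu}\, s_{n-\ell+m,\nu},
\]
whose $\ell=0$ term isolates $s_{n+m,\nu}\, c_{j,n+j,\lambda,\nu}$.

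Using the $P$-side, split the inner product via \eqref{eq Sobolev inner prod_lambda,nu, Dw y Dq}. For the $\U$-part, orthogonality of $\{P_k\}_{k\ge0}$ selects pairs with $i_1=i_2+j$ and gives $\sum_{i=j}^{M}\widetilde{a}_{i,n+j}\widetilde{a}_{i-j,n}\,\langle\U,P_{n+j-i+m}^2\rangle$. For the $\V$-part, apply $D_\nu^m$ to $R_n$: because $D_\nu^m P_{k+m} = \eta_{k,m,\nu}P^{[m,\nu]}_k$ and $\widetilde{a}_{i,n}=\eta_{n,m,\nu}a_{i,n}/\eta_{n-i,m,\nu}$, one has
\[
D_\nu^m R_n(x) = \eta_{n,m,\nu}\Bigl(P^{[m,\nu]}_n(x) + \sum_{i=1}^{M} a_{i,n}P^{[m,\nu]}_{n-i}(x)\Bigr) = \eta_{n,m,\nu}\Bigl(Q_n(x) + \sum_{i=1}^{N} b_{i,n}Q_{n-i}(x)\Bigr)
\]
by the coherence relation \eqref{eq (M,N)-Dnu-coherence order (m,0) with [], Dw y Dq}. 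Applying orthogonality of $\{Q_k\}_{k\ge0}$ in the same fashion then gives $\lambda\sum_{i=j}^{N}\widetilde{b}_{i,n+j}\widetilde{b}_{i-j,n}\,\langle\V,Q_{n+j-i}^2\rangle$. Adding the two pieces produces exactly $\zeta_{j,n,\lambda,\nu}$.

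Equating the two computations and solving for the $\ell=0$ term gives \eqref{eq s*c = zeta_j,n,lambda,nu - sum^K-j c*c*s, Dw y Dq}. No real obstacle is expected; the main care required is (i) correctly rewriting $D_\nu^m R_n$ via the coherence identity so that the $\V$-part reduces to a diagonal sum in $\{Q_k\}$, and (ii) keeping the boundary conventions ($a_{i,n}=b_{i,n}=c_{j,n,\lambda,\nu}=0$ outside the allowed ranges and $a_{0,n}=b_{0,n}=c_{0,n,\lambda,\nu}=1$) consistent so that the index ranges in the two diagonalized sums match the definitions of $\zeta_{j,n,\lambda,\nu}$ and of the claimed recursion.
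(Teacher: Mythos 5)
Your proof is correct and is essentially the paper's own argument in a slightly more symmetric packaging: the paper starts from its explicit formula \eqref{eq c_j,n,lambda,nu <S_n-j+m,S_n-j+m>_lambda,nu = sum^M + sum^N, Dw y Dq} for $c_{j,n,\lambda,\nu}$ (which is precisely $\langle R_n,S_{n-j+m}\rangle_{\lambda,\nu}/s_{n-j+m,\nu}$ in your notation), substitutes the two expansions of the common polynomial into it, and then re-invokes that same formula to identify the cross terms, whereas you evaluate $\langle R_{n+j},R_n\rangle_{\lambda,\nu}$ directly in both bases so the cross terms appear at once as the $\ell\geq1$ diagonal terms of the $S$-expansion. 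The essential ingredients --- the two expansions of the common polynomial from \eqref{eq P_n+m + sum^M a P_n-i+m = S_n+m + sum^K c S_n-i+m, Dw y Dq}, orthogonality in the $P$-, $Q$- and $S$-bases, the coherence relation \eqref{eq (M,N)-Dnu-coherence order (m,0) with [], Dw y Dq} to reduce the $\V$-part, and the final index shift $n\mapsto n+j$ --- are identical.
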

\begin{proof}
 Notice that \eqref{eq P_n+m + sum^M a P_n-i+m = S_n+m + sum^K c S_n-i+m, Dw y Dq} and \eqref{eq c_j,n,lambda,nu <S_n-j+m,S_n-j+m>_lambda,nu = sum^M + sum^N, Dw y Dq} hold setting $c_{0,n,\lambda,\nu} = 1$ for $n\geq 0$. Then, from \eqref{eq (M,N)-Dnu-coherence order (m,0) with [], Dw y Dq} and \eqref{eq P_n+m + sum^M a P_n-i+m = S_n+m + sum^K c S_n-i+m, Dw y Dq}, \eqref{eq c_j,n,lambda,nu <S_n-j+m,S_n-j+m>_lambda,nu = sum^M + sum^N, Dw y Dq} becomes, for $n\geq j$ and $0\leq j\leq K$,
 \begin{align*}
  s_{n-j+m,\nu} c_{j,n,\lambda,\nu}
  & = \sum_{i=j}^M \sum_{\ell=0}^M \widetilde{a}_{i,n} \widetilde{a}_{\ell,n-j} \left\langle \U, P_{n-i+m}(x)P_{n-j-\ell+m}(x) \right\rangle \\
  & - \sum_{i=j}^M \sum_{\ell=1}^K \widetilde{a}_{i,n} c_{\ell,n-j,\lambda,\nu} \left\langle \U, P_{n-i+m}(x)S_{n-j-\ell+m}(x;\lambda,\nu) \right\rangle \\
  & + \lambda \sum_{i=j}^N \sum_{\ell=0}^N \widetilde{b}_{i,n} \widetilde{b}_{\ell,n-j} \left\langle \V, Q_{n-i}(x)Q_{n-j-\ell}(x) \right\rangle \\
  & - \lambda \sum_{i=j}^N \sum_{\ell=1}^{K} \widetilde{b}_{i,n} c_{\ell,n-j,\lambda,\nu} \left\langle \V, Q_{n-i}(x)D_\nu^m[S_{n-j-\ell+m}(x;\lambda,\nu)] \right\rangle.
 \end{align*}
 Since $\langle \U, P_{n-i+m}(x)S_{n-j-\ell+m}(x;\lambda,\nu)\rangle=0$ and $\langle \V, Q_{n-i}(x)D_\nu^m[S_{n-j-\ell+m}(x;\lambda,\nu)]\rangle=0$, for $i<j+\ell$ or $j+\ell>K\,(\geq M,N)$, thus, for $n\geq j$ and $0\leq j\leq K$,
 \begin{multline*}
  s_{n-j+m,\nu} c_{j,n,\lambda,\nu} = \sum_{i=j}^M \widetilde{a}_{i,n} \widetilde{a}_{i-j,n-j} \langle\U,P_{n-i+m}^2(x)\rangle + \lambda \sum_{i=j}^N \widetilde{b}_{i,n} \widetilde{b}_{i-j,n-j} \langle\V,Q_{n-i}^2(x)\rangle \\
  - \sum_{\ell=1}^{K-j} c_{\ell,n-j,\lambda,\nu} \sum_{i=j+\ell}^{M} \widetilde{a}_{i,n}\langle \U, P_{n-i+m}(x)S_{n-j-\ell+m}(x;\lambda,\nu)\rangle \\
  - \lambda \sum_{\ell=1}^{K-j} c_{\ell,n-j,\lambda,\nu} \sum_{i=j+\ell}^{N} \widetilde{b}_{i,n} \langle \V, Q_{n-i}(x)D_\nu^m[S_{n-j-\ell+m}(x;\lambda,\nu)] \rangle,
 \end{multline*}
 and from \eqref{eq c_j,n,lambda,nu <S_n-j+m,S_n-j+m>_lambda,nu = sum^M + sum^N, Dw y Dq}, the sum of the last two terms is $-\sum_{\ell=1}^{K-j} c_{\ell,n-j,\lambda,\nu} s_{n-j-\ell+m,\nu}$ $c_{j+\ell,n,\lambda,\nu}$. Finally, substituting $n$ by $n+j$, we get \eqref{eq s*c = zeta_j,n,lambda,nu - sum^K-j c*c*s, Dw y Dq}.
\end{proof}

\begin{remark} Notice that Theorem \ref{teo (M,N)-Dnu-coherence order (m,0) impl relacion con Sobolev, Dw y Dq}
allows recursively compute the $D_\nu$-Sobolev SMOP $\{S_n(x;\lambda,\nu)\}_{n\geq0}$ and the coefficients
$\{c_{j,n,\lambda,\nu}\}_{n\geq0}$, $1\leq j\leq K$.
 Moreover, Theorem \ref{teo (M,N)-Dnu-coherence order (m,0) impl eq recursiva s_n,nu y c_j,n,lambda,nu, Dw y Dq} gives a recursive equation for computing the sequences $\{c_{j,n,\lambda,\nu}\}_{n\geq0}$, $1\leq j\leq K$, and $\{\langle S_n(x;\lambda,\nu), S_n(x;\lambda,\nu)\rangle_{\lambda,\nu}\}_{n\geq0}$, and thus, using \eqref{eq P_n+m + sum^M a P_n-i+m = S_n+m + sum^K c S_n-i+m, Dw y Dq} and $S_{n}(x;\lambda,\nu)=P_n(x)$ for $n< m$, we can get the $D_\nu$-Sobolev SMOP $\{S_n(x;\lambda,\nu)\}_{n\geq0}$.
\end{remark}

\section*{Acknowledgements}
We are grateful to Prof. Francisco Marcell\'an for his valuable comments and remarks that helped us to
improve the paper.
This work was supported by Direcci\'{o}n General de Investigaci\'{o}n, Desarrollo e Innovaci\'{o}n, Ministerio de Econom\'{i}a y Competitividad of Spain, under grants MTM2012-36732-C03 (RAN, NCP-C, JP),
Junta de Andaluc\'{\i}a (Spain) under grants FQM262, FQM-7276, and P09-FQM-4643 (RAN), FEDER funds (RAN). The work of J. Petronilho was also partially supported by the Centro de Matem\'atica da Universidade de Coimbra (CMUC), funded by the European Regional Development Fund through the program COMPETE and by the Portuguese Government through the FCT - Funda\c{c}\~ao para a Ci\^encia e a Tecnologia under the project PEst-C/MAT/UI0324/2013.
The work of R. Sevinik was supported by T\"{U}B\.{I}TAK, the Scientific and
Technological Research Council of Turkey.

\section*{References}

\end{document}